\documentclass[12pt]{article}
\usepackage{amsmath, amsthm, amscd, amsfonts, amssymb, graphicx}
\usepackage{titles}
\usepackage{algorithm}
\usepackage{algpseudocode}
\usepackage{subfigure} 
\usepackage{caption}
\usepackage{graphics,graphicx}
\usepackage{amsmath, amssymb, amsthm}
\usepackage{mathrsfs} 
\usepackage{enumitem}

\usepackage{indentfirst}

\numberwithin{equation}{section}
\usepackage{epsfig}
\usepackage{color}
\usepackage{fancyhdr}
\usepackage{titlesec}
\usepackage{tikz}
\usepackage{afterpage}
\usetikzlibrary{arrows}
\usetikzlibrary{decorations.markings}
\tikzstyle{block}=[draw opacity=0.7,line width=1.4cm]
\tikzset{
	big black arrow/.style={
		decoration={markings,mark=at position 1 with {\arrow[scale=2.5,black]{>}}},
		postaction={decorate},
		shorten >=0.4pt},
	line/.style={draw, ->}}

\usepackage{eufrak}
\setbox0=\hbox{$+$}
\newdimen\plusheight
\plusheight=\ht0
\def\+{\;\lower\plusheight\hbox{$+$}\;}

\setbox0=\hbox{$-$}
\newdimen\minusheight
\minusheight=\ht0
\def\-{\;\lower\minusheight\hbox{$-$}\;}

\setbox0=\hbox{$\cdots$}
\newdimen\cdotsheight
\cdotsheight=\plusheight
\def\cds{\lower\cdotsheight\hbox{$\cdots$}}
\newtheorem{theorem}{Theorem}[section]

\newtheorem{corollary}[theorem]{Corollary}

\theoremstyle{definition}

\theoremstyle{remark}

\numberwithin{equation}{section}

\allowdisplaybreaks

\date{}

\begin{document}
\begin{center}\Large{\textbf{Some new congruences and identities  for $SOME(n)$, $DSOME(n)$, $\overline{SOME}(n)$ functions and analogues}}\end{center}\vskip.5cm

\linespread{1.2}
\begin{center}
	{\bf Gaurab Bardhan$^1$ and Nipen Saikia$^{2, \ast}$}\\
	$^1$Department of Mathematics, Tyagbir Hem Baruah College,\\ Jamugurihat, Sonitpur, Assam, India.\\
	E. Mail: gaurabbardhan561@gmail.com
	\vskip2mm
	$^2$Department of Mathematics, Rajiv Gandhi
	University,\\ Rono Hills, Doimukh, Arunachal Pradesh, India.\\
	E. Mail(s): nipennak@yahoo.com\\
	$^\ast$\textit{Corresponding author}.\end{center}
\begin{abstract}
Andrews and Dastidar (\textit{Ramanujan J. 69, Article Num- ber 26, (2026)} ) introduced the $SOME(n)$ and $DSOME(n)$ functions that calculate the sum of all odd parts minus the sum of all even parts of ordinary partitions and distinct partitions,  respectively of a positive integer $n$, and proved their generating functions and some congruences modulo 4 and 5.  Recently,  Gireesh and Hemanthkumar introduced an overpartition analogue of $SOME(n)$ function, denoted by $\overline{SOME}(n)$ and proved some congruences modulo 3, 5 and powers of 2. In this paper,   we prove  some new identities and congruences for $SOME(n)$, $DSOME(n)$, and $\overline{SOME}(n)$ functions, including monotonicity results. We also define a general analogue of $SOME(n)$ function, denoted by $S_{\mathcal P}(n)$, which calculates the sum of all odd parts minus the sum of all even parts in  any arbitrary  family of partitions $\mathcal P(n)$ of a positive integer $n$, and prove some divisibility properties. Additionally, we define a colour partition analogue of $SOME(n)$ function and  prove divisibility properties.
\end{abstract}\vskip3mm
\noindent  {\bf Keywords and phrases:} Integer partitions; $SOME$ function and analogues; Identities; Congruences·
\vskip 2mm
\noindent  {\bf Mathematical Subject Classification:} Primary-11P81, 11P83; Secondary-05A15, 05A17.
\section{Introduction}
For any positive integer $n$ and any complex numbers $\alpha$ and $q$ with $|q|<1$, define the standard $q$-series notation as
$$
(\alpha;q)_{0}=1,\quad  (\alpha;q)_{n}=\prod_{j=0}^{n-1}(1-\alpha q^{j}),\quad(\alpha;q)_{\infty}=\prod_{j=0}^{\infty}(1-\alpha q^{j}).
$$
Throughout this paper, we use the notation
$$
g_t:=(q^t;q^t)_\infty
$$
for any positive integer $t$.

A non-increasing finite sequence of positive integers $\beta_1\geq \beta_2 \geq \cdots \geq \beta_k>0$ is said to be a partition of a positive integer $n$ if $n=\sum_{i=1}^{k}\beta_i$.
The integers $\beta_i$ are called the parts of the partition. If $p(n)$ denotes the number of partitions of $n$, then its  generating function \cite{euler1748introductio} is given by
\begin{equation}\label{ie4}
\sum_{n=0}^{\infty}p(n)q^n=\dfrac{1}{(q;q)_\infty}=\dfrac{1}{g_1};\qquad p(0)=1.
\end{equation}
Also, if  $p_d(n)$ denotes the number of partitions of $n$ into distinct parts with $p_d(0)=1$, then its generating function is given by
 \begin{equation}\label{p_d}
 \sum_{n=0}^{\infty}p_d(
 n)q^n= (-q;q)_\infty.
\end{equation}
An overpartition of a positive integer $n$ is a partition of $n$  in which the first occurrence (equivalently, the final occurrence) of a part may be overlined.  If the number of overpartitions of $n$ is denoted by $\overline{p}(n)$, then its generating function \cite{cor} is given by 
$$
	\sum_{n=0}^{\infty}\overline{p}(n)=\dfrac{(-q;q)_\infty}{(q;q)_\infty}.
$$
Recently, Andrews and Ghosh Dastidar \cite{das} introduced two partition related functions, $SOME(n)$ and $DSOME(n)$, where  $SOME(n)$ calculates the sum of all odd parts in all partitions of $n$ minus the sum of all even parts in all partitions of $n$, and  $DSOME(n)$  calculates the sum of all odd parts  minus the sum of all even parts  in all the partitions of $n$ with distinct parts. They gave the generating functions of $SOME(n)$ and $DSOME(n)$ as, for 
\begin{align}
\sum_{n=0}^{\infty}SOME(n)q^n
&\label{i6}=
\dfrac{1}{(q;q)_\infty}\sum_{m=1}^{\infty}\dfrac{q^m}{(1+q^m)^2},\\
&\label{Tr}=\dfrac{1}{(q^2;q^2)_\infty^2}\sum_{m=0}^{\infty}T_mq^{T_m},
\end{align}
where $T_m=\dfrac{m(m+1)}{2},$ and
\begin{align}
\sum_{n=0}^{\infty}DSOME(n)q^n
&=
(-q;q)_\infty\sum_{m=1}^{\infty}\dfrac{ (-1)^{m-1}q^m}{(1+q^m)^2}.
\label{i7}
\end{align}
Moreover,  they proved the following interesting congruences satisfied by $SOME(n)$ and $DSOME(n)$:
$$
SOME(5n+2)\equiv0\pmod 5,\quad
SOME(5n+4)\equiv0\pmod 5,
$$
$$
SOME(4n)\equiv0\pmod4,\quad
DSOME(4n)\equiv0\pmod4
$$  and  also conjectured that, if $24n \equiv1 \pmod{5^\alpha}$, then $$SOME(n) \equiv0 \pmod{5^\alpha}.$$
Baruah and Gogoi \cite{NDB} expressed the generating function of $DSOME(n)$ in the closed form as
$$
\sum_{n=0}^{\infty}DSOME(n)q^n
=
\dfrac{1}{8}
\left(
\dfrac{g_2}{g_1}
-
\dfrac{g_1^7}{g_2^3}
\right)
$$
and proved some new congruences  modulo $4$ and $8$ for $DSOME(n)$.
Gireesh and Hemanthkumar \cite{gh} studied on an overpartition analogue of the function $SOME(n)$, denoted by $\overline{SOME}(n)$,  which is defined as the sum of all odd parts minus the sum of all even parts taken over all overpartitions of $n$.
They obtained the generating function of $\overline{SOME}(n)$ as
\begin{equation}\label{ie13}
\sum_{n=1}^\infty \overline{SOME}(n)q^n = 2 \dfrac{(-q; q)_\infty}{(q; q)_\infty} \sum_{m=1}^\infty \dfrac{q^{2m-1}}{(1 + q^{2m-1})^2},
\end{equation}
and proved several congruences and other arithmetic properties of $\overline{SOME}(n)$.

Motivated by above work, in this paper,  we prove  some new identities and congruences for $SOME(n)$, $DSOME(n)$, and $\overline{SOME}(n)$ functions, including monotonicity results. We also define a general analogue of $SOME(n)$ function, denoted by $S_{\mathcal P}(n)$, which calculates the sum of all odd parts minus the sum of all even parts in  any arbitrary  family of partitions $\mathcal P(n)$ of a positive integer $n$, and prove some divisibility properties. Additionally, we define a colour partition analogue of $SOME(n)$ function and  prove divisibility properties.

The layout of the paper is as follows:  In Sect. 2, we state some preliminary results that will be useful for establishing our main results. In Sect. 3, we prove  some new identities and congruences for $SOME(n)$, $DSOME(n)$, and $\overline{SOME}(n)$ functions, including monotonicity results.  In  Sect. 4,  we define the function $S_{\mathcal P}(n)$ and prove its divisibility properties.  Finally, in Sect. 5, we  define a colour partition analogue of $SOME(n)$ function and prove some divisibility properties.

\section{Preliminaries}
Recall, Ramanujan's general theta function $f(\alpha, \beta)$ \cite[p. 34, (18.1)]{BBC} is defined by
$$
   f(\alpha, \beta) = \sum_{m=-\infty}^{\infty} \alpha^{m(m+1)/2} \beta^{m(m-1)/2}, \qquad |\alpha\beta| < 1.
$$
Three important special cases for $f(\alpha, \beta)$ are the functions $\phi(q), \psi(q)$, and $f(-q)$ \cite[p. 35, Entry 18]{BBC} ,which are defined as
\begin{equation}\label{phi}
    \phi(q) := f(q, q) = \sum_{m=-\infty}^{\infty} q^{m^2} = \dfrac{g_2^5}{g_1^2g_4^2},
\end{equation}
\begin{equation}\label{psi}
    \psi(q) := f(q, q^3) = \sum_{m=-\infty}^{\infty} q^{m(m+1)/2} = \dfrac{g_2^2}{g_1},
\end{equation}
and 
\begin{equation}\label{f}
    f(-q) := f(-q, -q^2) = \sum_{m=-\infty}^{\infty} (-1)^m q^{m(3m-1)/2} = g_1.
\end{equation}
Let $\sigma(n)$ defined by 
$$
\sigma(n)=\sum_{d\mid n}d
$$
be the ordinary sum of divisors function. 
Then the Lambert series for  $\sigma(n)$ is given by 
	$$
	\sum_{n=1}^\infty \dfrac{n q^{n}}{1 - q^{n}}=\sum_{n=1}^\infty \sigma(n)q^n.	
    $$
Also, if $r_k(n)$ denotes the the number of representations of a any positive integer $n$ by the sum of $k$ squares of any integers, then from \cite[Eq. (3.2.1), (3.3.1)]{BBC}, we have 
\begin{equation}\label{r_2}
    r_2(n)=4\sum_{\substack{d|n\\d odd}}(-1)^{(d-1)/2}.
\end{equation}
and 
\begin{equation}\label{r_4}
r_4(n)=8\sum_{\substack{d|n\\4\nmid d }}d.
\end{equation}

\section{Identities and congruences for $SOME(n),$ $DSOME(n),$ and $\overline{SOME}(n)$}
\begin{theorem}
For any integer $n\geq 1$, we have 
$$
SOME(n)+4\sum_{i=1}^{\lfloor n/2\rfloor}p(n-2i)\sigma(i)=np(n), 
$$where $\lfloor \cdot\rfloor$ denotes the floor function.
\end{theorem}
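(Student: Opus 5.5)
The plan is a direct combinatorial argument using generating functions, which avoids manipulating \eqref{i6} analytically. For a partition $\lambda$ of $n$, write $O(\lambda)$ and $E(\lambda)$ for the sums of its odd and even parts. Then $O(\lambda)+E(\lambda)=n$, so summing over all $p(n)$ partitions of $n$ gives
$$
np(n)-SOME(n)=\sum_{\lambda\vdash n}\bigl(O(\lambda)+E(\lambda)\bigr)-\sum_{\lambda\vdash n}\bigl(O(\lambda)-E(\lambda)\bigr)=2\sum_{\lambda\vdash n}E(\lambda).
$$
The theorem is therefore equivalent to
$$
\sum_{\lambda\vdash n}E(\lambda)=2\sum_{i=1}^{\lfloor n/2\rfloor}p(n-2i)\sigma(i).
$$

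Next we compute the generating function of $\sum_{\lambda\vdash n}E(\lambda)$. Fix a part size $k$ and let $m_k(\lambda)$ be the multiplicity of $k$ in $\lambda$. Marking the factor $1/(1-q^k)$ of $1/(q;q)_\infty$ with a variable $z$ gives $1/\bigl((1-zq^k)\prod_{j\neq k}(1-q^j)\bigr)$. Differentiating in $z$ at $z=1$ yields
$$
\sum_{n\ge0}\Bigl(\sum_{\lambda\vdash n}m_k(\lambda)\Bigr)q^n=\frac{q^k}{1-q^k}\cdot\frac{1}{(q;q)_\infty}.
$$
Multiplying by $k$ and summing over even $k=2i$ gives
$$
\sum_{n\ge0}\Bigl(\sum_{\lambda\vdash n}E(\lambda)\Bigr)q^n=\frac{1}{g_1}\sum_{i=1}^\infty\frac{2i\,q^{2i}}{1-q^{2i}}=\frac{2}{g_1}\sum_{i=1}^\infty\sigma(i)q^{2i}.
$$
The second equality is the Lambert series for $\sigma$ from Section 2, with $q$ replaced by $q^2$.

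Finally, we compare coefficients of $q^n$, using $1/g_1=\sum p(m)q^m$ from \eqref{ie4}. The coefficient is $2\sum_{i\ge1,\,2i\le n}\sigma(i)p(n-2i)$, and the condition $2i\le n$ gives exactly the range $1\le i\le\lfloor n/2\rfloor$. Combining this with the first step yields $np(n)-SOME(n)=4\sum_{i=1}^{\lfloor n/2\rfloor}p(n-2i)\sigma(i)$, which is the claim.

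No step is a real obstacle. The only point needing care is the justification of the multiplicity generating function $q^k/\bigl((1-q^k)(q;q)_\infty\bigr)$. Besides the $z$-derivative argument, it can be checked bijectively: pairs $(\lambda,j)$ with $1\le j\le m_k(\lambda)$ correspond to partitions of $n-jk$, so the count is $\sum_{j\ge1}p(n-jk)$. As a consistency check, one could also recover $np(n)$ from $q\frac{d}{dq}(1/g_1)=\frac{1}{g_1}\sum_{m}\frac{mq^m}{1-q^m}$ and verify that the analogous signed sum reproduces \eqref{i6}. This is not needed for the proof.
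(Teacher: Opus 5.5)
Your proof is correct, and it arrives at the same generating-function identity as the paper,
$$\sum_{n\ge1}SOME(n)q^n=\sum_{n\ge0}np(n)q^n-\frac{4}{g_1}\sum_{i\ge1}\sigma(i)q^{2i},$$
but by a genuinely different route.

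The paper starts from the Andrews--Ghosh Dastidar generating function \eqref{i6} and does the odd/even split inside the Lambert series. It expands $q^m/(1+q^m)^2=\sum_{d}(-1)^{d-1}dq^{md}$, writes $\sum_{d\mid j}(-1)^{d-1}d=\sigma(j)-2\sum_{d\mid j,\,d\ \mathrm{even}}d$, and evaluates the even-divisor sum as $2\sigma(j/2)$ when $j$ is even. It then identifies $\frac{1}{g_1}\sum_j\sigma(j)q^j$ with $\sum_n np(n)q^n$ by logarithmic differentiation.

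You do the split at the level of individual partitions, via $O(\lambda)-E(\lambda)=n-2E(\lambda)$. This is the same observation the paper only uses later in Section 4. You then compute the generating function of the total even-part sum directly, by marking multiplicities.

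Your version has three advantages. It is self-contained, since it never needs \eqref{i6}. It explains the correction term: $2\sum_{i}p(n-2i)\sigma(i)$ is exactly the total of all even parts over all partitions of $n$. And it carries over verbatim to any family with a product generating function, such as the colour partitions of Section 5.

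The paper's argument is a short series manipulation once \eqref{i6} is available, but it depends on that previously established result.
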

\begin{proof} We note that,
\begin{align}
\sum_{m=1}^{\infty}\dfrac{q^m}{(1+q^m)^2}
&=
\sum_{m=1}^{\infty}\sum_{d=1}^{\infty}(-1)^{d-1}d q^{md} \notag\\
&=
\sum_{j=1}^{\infty}\sum_{d\mid j}(-1)^{d-1}d q^j\notag\\
&=
\sum_{j=1}^{\infty}\left(\sum_{d\mid j}d
-
2\sum_{\substack{d\mid j\\ d\ \mathrm{even}}}d\right)q^j\notag\\
&=
\sum_{j=1}^{\infty}\sigma(j)q^j
-
2\sum_{j=1}^{\infty}
\sum_{\substack{d\mid j\\ d\ \mathrm{even}}}d q^j\notag\\
&=\label{mt2e4}
\sum_{j=1}^{\infty}\sigma(j)q^j
-
2\sum_{j=1}^{\infty}
\sum_{\substack{d\mid j\\ d\ \mathrm{even}}}d q^j
\end{align}
We note that, if $d$ is even, say $d=2e$, where $e$ is a positive integer, then $d\mid j$ implies that $j$
is even, say $j=2i$, where $i$ is a positive integer and $e\mid i$. Therefore, for $j=2i, $
\begin{equation}\label{eq:even-divisor-sum}
\sum_{\substack{d\mid j\\ d\ \mathrm{even}}}d
=
2\sum_{e\mid i}e
=
2\sigma(i),
\end{equation}
Using \eqref{eq:even-divisor-sum} in  \eqref{mt2e4}, we obtain
\begin{equation}\label{eq:key-inner-series}
\sum_{m=1}^{\infty}\dfrac{q^m}{(1+q^m)^2}
=
\sum_{j=1}^{\infty}\sigma(j)q^j
-
4\sum_{i=1}^{\infty}\sigma(i)q^{2i}.
\end{equation}
Employing  \eqref{eq:key-inner-series} in \eqref{i6}, we obtain
\begin{equation}\label{mt2e5}
\sum_{n=1}^{\infty}SOME(n)q^n
=
\dfrac{1}{(q;q)_\infty}
\sum_{j=1}^{\infty}\sigma(j)q^j
-
4\dfrac{1}{(q;q)_\infty}
\sum_{i=1}^{\infty}\sigma(i)q^{2i}.
\end{equation}
Employing \eqref{ie4} in \eqref{mt2e5}, we obtain 
\begin{equation}\label{mt2e6}
    \sum_{n=1}^{\infty}SOME(n)q^n
=
\dfrac{1}{(q;q)_\infty}
\sum_{j=1}^{\infty}\sigma(j)q^j
-
4\dfrac{1}{(q;q)_\infty}
\sum_{i=1}^{\infty}\sigma(i)q^{2i}.
\end{equation}
Now by logarthmic differentiation of \eqref{ie4}, it is easy to see that,
\begin{equation}\label{mt2e7}
   \sum_{m=0}^{\infty}np(n)q^n= \dfrac{1}{(q;q)_\infty}
\sum_{j=1}^{\infty}\sigma(j)q^j.
\end{equation}
Employing  \eqref{ie4} and \eqref{mt2e7} in \eqref{mt2e6}, we obtain 
\begin{equation}\label{mt2e6i}
	\sum_{n=1}^{\infty}SOME(n)q^n
	=
	\sum_{m=0}^{\infty}np(n)q^n
	-
	4\left(\sum_{n=0}^{\infty}p(n)q^n\right)
	\sum_{i=1}^{\infty}\sigma(i)q^{2i}.
\end{equation}
Equating the coefficients of $q^n$ in both sides of \eqref{mt2e6i}, we arrive at the desired result. 
\end{proof}
\begin{theorem}
For any integer $n\geq1$,  we have
$$
\overline{SOME}(n)
\equiv
\begin{cases}
2 \pmod 4, & \text{if } n \text{ is an odd perfect square},\\
0 \pmod 4, & \text{otherwise}.
\end{cases}    
$$
\end{theorem}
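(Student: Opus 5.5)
The plan is to reduce the generating function \eqref{ie13} modulo $4$. Because it already carries an overall factor of $2$, it is enough to know the remaining power series modulo $2$: if $A(q)\equiv B(q)\pmod 2$ coefficientwise, then $2A(q)\equiv 2B(q)\pmod 4$. So I will write
$$
\sum_{n=1}^\infty \overline{SOME}(n)q^n = 2\,\frac{(-q;q)_\infty}{(q;q)_\infty}\,S(q),\qquad S(q):=\sum_{m=1}^\infty \frac{q^{2m-1}}{(1+q^{2m-1})^2},
$$
and determine each factor modulo $2$.

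For the product factor, $(-q;q)_\infty\equiv(q;q)_\infty\pmod 2$, so $(-q;q)_\infty/(q;q)_\infty\equiv 1\pmod 2$ as power series with integer coefficients. Equivalently, this quotient is $1/\phi(-q)$ with $\phi(-q)=g_1^2/g_2$ from \eqref{phi}, and $\phi(-q)\equiv 1\pmod 2$ because every nonzero term $q^{m^2}$ with $m\neq 0$ occurs twice.

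For $S(q)$, I will expand each summand as in the proof of the preceding theorem. With $k=2m-1$,
$$
\frac{q^{k}}{(1+q^{k})^2}=\sum_{d=1}^\infty (-1)^{d-1}d\,q^{kd}\equiv \sum_{d\ \mathrm{odd}} q^{kd}\pmod 2 .
$$
Summing over odd $k$ gives
$$
S(q)\equiv\sum_{n\ \mathrm{odd}} d(n)\,q^n\pmod 2,
$$
where $d(n)$ is the number of divisors of $n$. Here we use that every divisor of an odd $n$ is odd, and that even $n$ cannot arise as a product $kd$ of two odd numbers. Since $d(n)$ is odd exactly when $n$ is a perfect square, this gives $S(q)\equiv\sum_{j\ \mathrm{odd}} q^{j^2}\pmod 2$.

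Combining the two congruences yields
$$
\sum_{n\ge1}\overline{SOME}(n)q^n\equiv 2\sum_{j\ge1,\ j\ \mathrm{odd}}q^{j^2}\pmod 4 .
$$
Comparing coefficients then gives the two stated cases. The only delicate step is the bookkeeping that justifies reducing the cofactor of $2$ modulo $2$. This works because every series involved has integer coefficients, so the reductions are legitimate. Beyond that, the argument is routine.
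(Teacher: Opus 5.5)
Your proof is correct, and it takes a genuinely different route from the paper's.

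The paper does not reduce \eqref{ie13} directly. It starts from the Gireesh--Hemanthkumar identity $\sum_{n\ge1}\overline{SOME}(n)q^n=\frac{2}{\phi(-q^2)^2}\sum_{k\ge1}k^2q^{k^2}$ and multiplies through by $\phi(-q^2)^2=\sum_{j\ge0}(-1)^jr_2(j)q^{2j}$. This gives the exact recurrence $\overline{SOME}(n)+\sum_{j=1}^{\lfloor n/2\rfloor}(-1)^jr_2(j)\,\overline{SOME}(n-2j)=2n$ when $n$ is a square, and $0$ otherwise. It then uses $4\mid r_2(j)$ for $j\ge1$, from \eqref{r_2}, to discard every correction term, so $\overline{SOME}(n)\equiv 2n\pmod 4$ on squares and $\equiv 0$ elsewhere.

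You instead keep the Lambert-series form and use only two parity facts: $\overline{p}(n)$ is even for $n\ge1$, and the number of divisors of $n$ is odd exactly when $n$ is a square. The built-in factor $2$ then lifts these mod-$2$ facts to the mod-$4$ statement.

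Your version is more elementary and self-contained, since it does not depend on the nontrivial theta-function identity the paper imports. The paper's version yields an exact identity rather than only a congruence. That identity is the template it reuses for the modulus $8$ and modulus $2^{\alpha+2}$ theorems, with $\phi(-q^2)^4$ and $\phi(-q^2)^{2^{\alpha+1}}$ in place of $\phi(-q^2)^2$. Your argument only sees the cofactor of $2$ modulo $2$, so it is tailored to modulus $4$. Pushing it to modulus $8$ would mean controlling both $\frac{(-q;q)_\infty}{(q;q)_\infty}$ and $S(q)$ modulo $4$, including the even-indexed coefficients of $S(q)$.
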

\begin{proof}
From \cite[Eq. 38]{gh}, we note that
\begin{equation}\label{mt3e1}
    \sum_{n=1}^{\infty}\overline{SOME}(n)q^n
=
\dfrac{2}{\phi(-q^2)^2}
\sum_{k=1}^{\infty}k^2q^{k^2}.
\end{equation}
Replacing $q$ by $-q^2$ in \eqref{phi} and then employing it in \eqref{mt3e1}, we obtain
\begin{equation}\label{mt3e2}
    \left(
\sum_{a,b\in\mathbb Z}
(-1)^{a+b}q^{2(a^2+b^2)}
\right)
\left(
\sum_{n=1}^{\infty}\overline{SOME}(n)q^n
\right)
=
2\sum_{k=1}^{\infty}k^2q^{k^2}.
\end{equation}
Comparing the coefficients of like powers of $q$ on both sides of \eqref{mt3e2}, we obtain
\begin{equation}\label{mt3e3}
  \overline{SOME}(n)
+
\sum_{j=1}^{\lfloor n/2\rfloor}
(-1)^j r_2(j)\overline{SOME}(n-2j)
=
\begin{cases}
2n, & \text{if } n \text{ is a perfect square},\\[4pt]
0, & \text{otherwise}.
\end{cases}  
\end{equation}
Employing \eqref{r_2} in \eqref{mt3e3}, we arrive at the  desired result. 
\end{proof}

\begin{theorem}
For any integer $n\ge 1$, we have
$$
\overline{SOME}(n)
\equiv
\begin{cases}
2 \pmod 8, & \text{if } n \text{ is an odd perfect square},\\[4pt]
0 \pmod 8, & \text{otherwise}.
\end{cases}
$$
\end{theorem}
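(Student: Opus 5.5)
The plan is to work directly from the closed form \eqref{mt3e1} rather than from the recurrence \eqref{mt3e3}. The reason is that the factor $2$ in front reduces the problem to knowing $1/\phi(-q^2)^2$ modulo $4$, and that turns out to be trivial. Concretely, I would show that
$$
\frac{1}{\phi(-q^2)^2}\equiv 1 \pmod 4
$$
as a power series with integer coefficients.

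\textbf{Step 1: the theta factor.} From \eqref{phi} we have $\phi(-q^2)=1+2A(q)$, where $A(q)=\sum_{m\ge1}(-1)^m q^{2m^2}$ has integer coefficients. Hence
$$
\phi(-q^2)^2=1+4A(q)+4A(q)^2\equiv 1\pmod 4 .
$$
Since $\phi(-q^2)^2$ has constant term $1$, its reciprocal is again a power series with integer coefficients. If $F\equiv1\pmod 4$ and $F\cdot G=1$, then $G\equiv F\cdot G=1\pmod 4$. So $1/\phi(-q^2)^2\equiv1\pmod 4$.

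\textbf{Step 2: reduce the generating function.} Multiplying the congruence of Step 1 by $\sum_{k\ge1}k^2q^{k^2}$, which has integer coefficients, and then by $2$, we get from \eqref{mt3e1}
$$
\sum_{n=1}^{\infty}\overline{SOME}(n)q^n\equiv 2\sum_{k=1}^{\infty}k^2q^{k^2}\pmod 8 .
$$
Comparing coefficients gives $\overline{SOME}(n)\equiv 0\pmod 8$ unless $n=k^2$ is a perfect square. When $n=k^2$, we get $\overline{SOME}(n)\equiv 2k^2=2n\pmod 8$.

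\textbf{Step 3: parity of the square.} If $k$ is odd then $k^2\equiv1\pmod 8$, so $2n\equiv2\pmod 8$. If $k$ is even then $4\mid k^2$, so $2n\equiv0\pmod 8$. This gives exactly the two cases in the statement.

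There is no genuine obstacle here. The only point needing care is the justification in Step 1 that a congruence modulo $4$ passes to reciprocals. This holds because the series have constant term $1$, so the inverse exists in $\mathbb{Z}[[q]]$ and the congruence is inherited by the identity $F\cdot G=1$.
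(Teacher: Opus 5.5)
Your proof is correct, but it reaches the result by a more elementary route than the paper.

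\textbf{The paper's route.} It clears the denominator in \eqref{mt3e1} by multiplying through by $\phi(-q^2)^4$ and expands both sides as convolutions against $(-1)^j r_4(j)$ and $(-1)^j r_2(j)$. It then invokes the divisor-sum formulas \eqref{r_2} and \eqref{r_4} (Jacobi's two- and four-square theorems) to get $8\mid r_4(j)$ and $4\mid r_2(m)$ for $j,m\ge 1$. Modulo $8$, only the term $2k^2$ with $n=k^2$ then survives in \eqref{mt4e3}.

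\textbf{Your route.} You invert $\phi(-q^2)^2$ directly, and your only input is $\phi(-q^2)=1+2A(q)$. This one observation already contains the divisibilities the paper takes from Jacobi. The congruences $(1+2A)^2\equiv 1\pmod 4$ and $(1+2A)^4\equiv 1\pmod 8$ say exactly that $4\mid r_2(m)$ and $8\mid r_4(j)$ for positive arguments.

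\textbf{What each buys.} Your argument is self-contained and needs no sum-of-squares theory. It also avoids the bookkeeping in \eqref{mt4e3t}, where the range $k>1$ and the divisor sum written in place of $r_4(j)$ are slightly off, harmlessly modulo $8$. It extends at once, since $(1+2A)^{2^{\alpha}}\equiv 1\pmod{2^{\alpha+1}}$; this is precisely the mechanism behind the paper's following theorem. In return, the paper's route gives the exact convolution identity \eqref{mt4e3} for $\overline{SOME}(n)$ in terms of $r_4$ and $r_2$. That identity carries more information than the congruence, though none of it is needed for this statement.
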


\begin{proof}
Multipliying both sides of \eqref{mt3e1} by $\phi(-q^2)^4$, we obtain
\begin{equation}\label{mt4e1}
    \phi(-q^2)^4
\sum_{n=1}^{\infty}\overline{SOME}(n)q^n
=
2\phi(-q^2)^2
\sum_{k=1}^{\infty}k^2q^{k^2}.
\end{equation}
Replacing $q$ by $-q^2$ in \eqref{phi} and then employing in \eqref{mt4e1}, we obtain
\begin{equation}\label{mt4e2}
\left(\sum_{j=0}^{\infty}(-1)^j r_4(j)q^{2j}\right)\left(\sum_{n=1}^{\infty}\overline{SOME}(n)q^n\right)=2\left(\sum_{j=0}^{\infty}(-1)^j r_2(j)q^{2j}\right)\left(\sum_{k=1}^{\infty}k^2q^{k^2}\right).
\end{equation}
Comparing the coefficients of the like powers of $q$ on both sides of \eqref{mt4e2}, we obtain
\begin{equation}\label{mt4e3}
    \sum_{j=0}^{\lfloor n/2 \rfloor}(-1)^jr_4(j)\overline{SOME}(n-2j)=2\sum_{\substack{k\geq1 \\k^2\leq n\\ n\equiv k^2\pmod2}}k^2(-1)^{(n-k^2)/2}r_2\left(\dfrac{n-k^2}{2}\right).
\end{equation}
Employing \eqref{r_4} in \eqref{mt4e3}, we obtain
\begin{align}
    \overline{SOME}(n)&=2\sum_{\substack{k>1 \\k^2\leq n\\ n\equiv k^2\pmod2}}k^2(-1)^{(n-k^2)/2}r_2\left(\dfrac{n-k^2}{2}\right)\notag\\
    &\label{mt4e3t}\quad-8\sum_{j=1}^{\lfloor n/2\rfloor}\left(-1\right)^j\left(\sum_{d|j}d\right)\overline{SOME}(n-2j).
\end{align}
Employing \eqref{r_2} in \eqref{mt4e3t} and simplifying, we arrive at the desired result.
\end{proof}
\begin{theorem}
For any integer $\alpha\geq 0$ and  $n\ge 1$, we have
$$
\overline{SOME}(n)
\equiv
0
\pmod{2^{\alpha+2}}
$$
if and only if
$$
\sum_{\substack{k\geq 1,\ m\geq 0\\ k^2+2m=n}}(-1)^m k^2 r_{2^{\alpha+1}-2}(m)\equiv0\pmod{2^{\alpha+1}},
$$
where $r_t(m)$ denotes the number of representations of $m$ as a sum of $t$
squares with the convention that  $r_0(0)=1$ and $r_0(m)=0$ for $m>0$.
\end{theorem}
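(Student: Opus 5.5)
The plan is to start from the identity \eqref{mt3e1} of Gireesh and Hemanthkumar, as in the proofs of the two preceding theorems, but to multiply by a power of $\phi(-q^2)$ chosen so that, modulo $2^{\alpha+2}$, the factor on the left becomes $1$. Put $F(q)=\sum_{n\ge1}\overline{SOME}(n)q^n$, $S(q)=\sum_{k\ge1}k^2q^{k^2}$ and $t=2^{\alpha+1}-2\ge 0$. Multiplying \eqref{mt3e1} by $\phi(-q^2)^{2^{\alpha+1}}$ gives
\begin{equation*}
\phi(-q^2)^{2^{\alpha+1}}F(q)=2\,\phi(-q^2)^{t}\,S(q).
\end{equation*}
This generalizes \eqref{mt4e1}, which is the case $\alpha=1$.

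The key lemma is $\phi(-q)^{2^{\alpha+1}}\equiv 1 \pmod{2^{\alpha+2}}$ as power series with integer coefficients. To prove it, I would write $\phi(-q)=1+2X$ with $X=\sum_{m\ge1}(-1)^mq^{m^2}\in\mathbb Z[[q]]$. Then $\phi(-q)^2=1+4Y$ with $Y=X+X^2$. Next I would show by induction on $\alpha$ that $(1+4Y)^{2^\alpha}=1+2^{\alpha+2}Y_\alpha$ with $Y_\alpha\in\mathbb Z[[q]]$. The inductive step is squaring:
\begin{equation*}
(1+2^{\beta}Z)^2=1+2^{\beta+1}\bigl(Z+2^{\beta-1}Z^2\bigr),
\end{equation*}
which keeps integrality since $\beta\ge 2$. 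Replacing $q$ by $q^2$, the left side above is congruent to $F(q)$ modulo $2^{\alpha+2}$. Hence
\begin{equation*}
F(q)\equiv 2\,\phi(-q^2)^{t}S(q)\pmod{2^{\alpha+2}}.
\end{equation*}

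Next I would expand $\phi(-q^2)^t$. Since $\phi(-q^2)=\sum_{j\in\mathbb Z}(-1)^jq^{2j^2}$ and $(-1)^{j_1+\cdots+j_t}=(-1)^{j_1^2+\cdots+j_t^2}$, we get $\phi(-q^2)^t=\sum_{m\ge0}(-1)^m r_t(m)q^{2m}$. The case $t=0$ (that is, $\alpha=0$) is covered by the stated convention $r_0(0)=1$, $r_0(m)=0$ for $m>0$. Comparing coefficients of $q^n$ then gives
\begin{equation*}
\overline{SOME}(n)\equiv 2\sum_{\substack{k\ge1,\ m\ge0\\ k^2+2m=n}}(-1)^m k^2 r_{t}(m)\pmod{2^{\alpha+2}}.
\end{equation*}
Finally, $2A\equiv 0\pmod{2^{\alpha+2}}$ holds if and only if $A\equiv0\pmod{2^{\alpha+1}}$, which yields the stated equivalence in both directions.

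The only step needing real care is the $2$-adic lemma $\phi(-q)^{2^{\alpha+1}}\equiv1 \pmod{2^{\alpha+2}}$. In particular, I need to confirm that the exponent $2^{\alpha+1}$, and not $2^\alpha$, is what gains the extra factor of $2$; that is why the first squaring is separated out to produce the factor $4$. Everything else is direct coefficient comparison, as in the proofs of the previous two theorems.
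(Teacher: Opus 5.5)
Your proof is correct and follows the same route as the paper: multiply \eqref{mt3e1} by $\phi(-q^2)^{2^{\alpha+1}}$, use $\phi(-q^2)^{2^{\alpha+1}}\equiv 1 \pmod{2^{\alpha+2}}$, expand $\phi(-q^2)^{2^{\alpha+1}-2}$ in terms of $r_{2^{\alpha+1}-2}$, compare coefficients, and cancel the factor $2$. The only difference is in the $2$-adic lemma: you prove it by iterated squaring from $\phi(-q)^2=1+4(X+X^2)$, while the paper reads it off the binomial expansion, using that $2^i\binom{2^{\alpha+1}}{i}$ is divisible by $2^{\alpha+2}$ for $i\ge 1$; your version is cleaner and avoids the stray factor $6$ in the paper's display \eqref{mt5e3}.
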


\begin{proof}
Multiplying both sides of \eqref{mt3e1} by $\phi(-q^2)^{2^{\alpha+1}},$ we obtain
\begin{equation}\label{mt5e1}
\phi(-q^2)^{2^{\alpha+1}}
\sum_{n=1}^{\infty}\overline{SOME}(n)q^n
=
2\phi(-q^2)^{2^{\alpha+1}-2}
\sum_{k=1}^{\infty}k^2q^{k^2}.    
\end{equation}
Replacing $q$ by $-q^2$ in \eqref{phi} and then employing in \eqref{mt5e1}, we obtain
\begin{equation}\label{mt5e2}
\phi(-q^2)^{2^{\alpha+1}}\sum_{n=1}^{\infty}\overline{SOME}(n)q^n
=
\left(2\sum_{m=0}^{\infty}(-1)^m r_{2s-2}(m)q^{2m}\right)
\left(\sum_{k=1}^{\infty}k^2q^{k^2}\right).     
\end{equation}
Using Binomial theorem, it is easily  seen that 
\begin{equation}\label{mt5e3}
   \phi(-q^2)^{2^{\alpha+1}}=1+\sum_{i=1}^{2^{\alpha+1}}2^i\binom{2^{\alpha+1}}{i}\left(\sum_{j=1}^{\infty}6(-1)^j q^{2j^2}\right)^{i}\equiv1\pmod{2^{\alpha+2}}. 
\end{equation}
Employing \eqref{mt5e3} in \eqref{mt5e2} and comparing the coefficients of like powers of $q$, we obtain
\begin{equation}\label{mt5e4}
    \overline{SOME}(n)\equiv
2
\sum_{\substack{k\geq 1,\ m\geq 0\\ k^2+2m=n}}
(-1)^m k^2 r_{2^{\alpha+1}-2}(m)\pmod{2^{\alpha+2}}.
\end{equation}
Now, the desired result follows easily from \eqref{mt5e4}.
\end{proof}
It is useful to note that, any positive integer $t$ can be expressed as 
$$
t=2^{\nu_2(t)}u,
$$
where $u$ is odd and $\nu_2(t)$ is a non-negative integer.
Also, if  $t$ is even, then $\nu_2(t)\geq1,$ and any divisor of $t$ is of the form $2^ke,$ where $0\leq k\leq \nu_2(j),$ and $e|u.$ Using this basic concept we derive the following theorem.\\
\begin{theorem}
	For any integer  $n\geq 1$, we have 
	$$
		DSOME(n)
		=
		\sum_{\substack{1\leq j\leq n\\ j\ \mathrm{odd}}}
		p_d(n-j)\sigma(j)
		-
		3\sum_{\substack{1\leq j\leq n\\ j\ \mathrm{even}}}
		p_d(n-j)
		\sigma\!\left(\dfrac{j}{2^{\nu_2(j)}}\right).    
	$$
	where $\nu_2(j)$ is a positive integer.
\end{theorem}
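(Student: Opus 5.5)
The plan is to follow the same route as the proof of the first theorem of this section: expand the Lambert-type series in \eqref{i7} as a power series, compute its coefficients explicitly, and then multiply by $(-q;q)_\infty$ using \eqref{p_d}.

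First, expanding $q^m/(1+q^m)^2=\sum_{d\ge1}(-1)^{d-1}dq^{md}$ as in the proof of the first theorem, I will write
$$
\sum_{m=1}^{\infty}\dfrac{(-1)^{m-1}q^m}{(1+q^m)^2}
=\sum_{m=1}^{\infty}\sum_{d=1}^{\infty}(-1)^{m+d}\,d\,q^{md}
=\sum_{j=1}^{\infty}c(j)q^j,\qquad c(j):=\sum_{md=j}(-1)^{m+d}d .
$$
The whole proof then reduces to showing that $c(j)=\sigma(j)$ for $j$ odd and $c(j)=-3\sigma\bigl(j/2^{\nu_2(j)}\bigr)$ for $j$ even.

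For $j$ odd every factorization $j=md$ has $m$ and $d$ odd. Hence every sign is $+1$, and $c(j)=\sigma(j)$ immediately.

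For $j$ even, write $j=2^a u$ with $a=\nu_2(j)\ge1$ and $u$ odd. By the remark preceding the theorem, each divisor is $d=2^k e$ with $0\le k\le a$ and $e\mid u$, and the complementary factor is $m=2^{a-k}(u/e)$. The sign $(-1)^{m+d}$ depends only on $k$:
\begin{itemize}
\item for $k=0$, $d$ is odd and $m$ is even, so the sign is $-1$ and the total contribution is $-\sigma(u)$;
\item for $k=a$, $d$ is even and $m$ is odd, so the sign is $-1$ and the total contribution is $-2^a\sigma(u)$;
\item for $0<k<a$, both $d$ and $m$ are even, so the sign is $+1$ and the total contribution is $2^k\sigma(u)$.
\end{itemize}
Summing these and using the geometric sum $\sum_{k=1}^{a-1}2^k=2^a-2$ gives
$$
c(j)=\sigma(u)\bigl(-1-2^a+(2^a-2)\bigr)=-3\sigma(u).
$$
This also covers $a=1$, where the middle range is empty. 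As a sanity check, $c(2)=-2-1=-3$ and $c(4)=-4+2-1=-3$.

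Finally, I substitute this into \eqref{i7} and use \eqref{p_d} to get
$$
\sum_{n\ge1}DSOME(n)q^n=\Bigl(\sum_{n\ge0}p_d(n)q^n\Bigr)\Bigl(\sum_{j\ \mathrm{odd}}\sigma(j)q^j-3\sum_{j\ \mathrm{even}}\sigma\bigl(j/2^{\nu_2(j)}\bigr)q^j\Bigr).
$$
Equating coefficients of $q^n$ then yields the stated formula. The only real obstacle is the sign bookkeeping in the even case, and in particular the telescoping that makes the $2^a$ terms cancel.
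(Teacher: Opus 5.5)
Your proposal is correct and follows essentially the same route as the paper. Both arguments expand $\sum_{m\ge1}(-1)^{m-1}q^m/(1+q^m)^2$ as $\sum_{j}\sum_{d\mid j}(-1)^{d+j/d}\,d\,q^j$ and handle odd $j$ directly. For even $j=2^{\nu_2(j)}u$, both write each divisor as $2^k e$ and use $-e+\sum_{k=1}^{\nu_2(j)-1}2^k e-2^{\nu_2(j)}e=-3e$, then multiply by $(-q;q)_\infty$ and compare coefficients. You also treat the case $\nu_2(j)=1$ explicitly and check $c(2)=c(4)=-3$, which the paper leaves implicit.
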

\begin{proof}
	Employing \eqref{p_d} in \eqref{i7}, we obtain
	\begin{equation}\label{mTe2}
		\sum_{n=0}^{\infty}DSOME(n)q^n
		=
		\left(\sum_{r=0}^{\infty}p_d(r)q^r\right)
		\left(
		\sum_{m=1}^{\infty}
		\dfrac{(-1)^{m-1}q^m}{(1+q^m)^2}
		\right). 
	\end{equation}Simplifying the second sum in the right hand side of \eqref{mTe2}, we see that
	\begin{align}
		\sum_{m=1}^{\infty}
		\dfrac{(-1)^{m-1}q^m}{(1+q^m)^2}=&\sum_{m=1}^{\infty}\sum_{d=1}^{\infty}
		(-1)^{m+d}d q^{md}\notag\\
		=&\sum_{j=1}^{\infty}\sum_{d|j}
		(-1)^{d+j/d} ~d q^{j}\notag\\
		=&\sum_{\substack{j=1\\ j\ \mathrm{odd}}}^{\infty}\sum_{d|j}
		(-1)^{d+j/d}~d q^{j}+\sum_{\substack{j=1\\ j\ \mathrm{even}}}^{\infty}\sum_{d|j}
		(-1)^{d+j/d}~d q^{j}\notag\\
		=&\sum_{\substack{j=1\\ j\ \mathrm{odd}}}^{\infty}\sum_{d\mid j}dq^j+\sum_{\substack{j=1\\ j\ \mathrm{even}}}^{\infty}\sum_{e\mid u}\sum_{k=0}^{\nu_2(j)}
		(-1)^{2^k e+2^{\nu_2(j)-k}u/e}2^k eq^j\notag\\
		=&\sum_{\substack{j=1\\ j\ \mathrm{odd}}}^{\infty}\sigma(j)q^j+\sum_{\substack{j=1\\ j\ \mathrm{even}}}^{\infty}\sum_{e\mid u}\left(-e+\sum_{k=1}^{\nu_2(j)-1}2^k e-2^{\nu_2(j)} e\right)q^j\notag\\
		=&\sum_{\substack{j=1\\ j\ \mathrm{odd}}}^{\infty}\sigma(j)q^j-3\sum_{\substack{j=1\\ j\ \mathrm{even}}}^{\infty}\sum_{e\mid u}eq^j\notag\\
		=&\sum_{\substack{j=1\\ j\ \mathrm{odd}}}^{\infty}\sigma(j)q^j-3\sum_{\substack{j=1\\ j\ \mathrm{even}}}^{\infty}\sigma(u)q^j\notag\\
		=&\label{mTe3}\sum_{\substack{j=1\\ j\ \mathrm{odd}}}^{\infty}\sigma(j)q^j-3\sum_{\substack{j=1\\ j\ \mathrm{even}}}^{\infty}\sigma\left(\dfrac{j}{2^{\nu_2(j)}}\right)q^j
	\end{align}
	Employing \eqref{mTe3} in \eqref{mTe2}, and then comparing the like powers of $q$ proves the theorem.
\end{proof}

\begin{theorem}
For integer $n\ge 1$, we have
$$
DSOME(n)
=
\sum_{\substack{j\in\mathbb Z\\ j(3j-1)\leq n}}
(-1)^j
SOME\bigl(n-j(3j-1)\bigr)
-
2
\sum_{\substack{j\geq 0\\ j(j+1)/2\leq n\\ n\equiv j(j+1)/2\pmod 2}}
SOME\left(\dfrac{n-j(j+1)/2}{2}\right).    
$$
\end{theorem}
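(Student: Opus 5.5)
The plan is to express the generating function \eqref{i7} of $DSOME(n)$ in terms of the generating function \eqref{i6} of $SOME(n)$, and then to recognise the resulting theta-type products via \eqref{f} and \eqref{psi}.

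\textbf{Step 1: split the Lambert-type series.} Write
$$
S(q):=\sum_{m=1}^{\infty}\dfrac{q^m}{(1+q^m)^2}.
$$
The sign $(-1)^{m-1}$ in \eqref{i7} is $+1$ for odd $m$ and $-1$ for even $m$. So the alternating series equals the full series minus twice its even-$m$ part, which is $S(q^2)$:
$$
\sum_{m=1}^{\infty}\dfrac{(-1)^{m-1}q^m}{(1+q^m)^2}=S(q)-2S(q^2).
$$
Hence
$$
\sum_{n=0}^{\infty}DSOME(n)q^n=(-q;q)_\infty S(q)-2(-q;q)_\infty S(q^2).
$$

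\textbf{Step 2: replace $S$ by the $SOME$ series.} From \eqref{i6} we have
$$
S(q)=(q;q)_\infty\sum_{n\ge0}SOME(n)q^n,\qquad S(q^2)=(q^2;q^2)_\infty\sum_{n\ge0}SOME(n)q^{2n}.
$$
We then simplify the two products using $(-q;q)_\infty=g_2/g_1$.

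\begin{itemize}
\item \emph{First term.} We get $(-q;q)_\infty(q;q)_\infty=g_2$. By \eqref{f} with $q\to q^2$,
$$
g_2=\sum_{j\in\mathbb Z}(-1)^jq^{j(3j-1)}.
$$
\item \emph{Second term.} We get $(-q;q)_\infty(q^2;q^2)_\infty=g_2^2/g_1=\psi(q)$. By \eqref{psi}, and counting each triangular number once,
$$
\psi(q)=\sum_{j\ge0}q^{j(j+1)/2}.
$$
\end{itemize}
Combining these gives
$$
\sum_{n\ge0}DSOME(n)q^n=\Bigl(\sum_{j\in\mathbb Z}(-1)^jq^{j(3j-1)}\Bigr)\sum_{n\ge0}SOME(n)q^n-2\Bigl(\sum_{j\ge0}q^{j(j+1)/2}\Bigr)\sum_{n\ge0}SOME(n)q^{2n}.
$$

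\textbf{Step 3: compare coefficients of $q^n$.} In the first product, the coefficient of $q^n$ is the sum over $j$ with $j(3j-1)\le n$ of $(-1)^j SOME(n-j(3j-1))$. In the second product, a term contributes only when $n-j(j+1)/2$ is even and nonnegative, and it contributes $SOME\bigl((n-j(j+1)/2)/2\bigr)$. This matches the stated sums and their index conditions exactly. The $SOME(0)=0$ terms are harmless.

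\textbf{Main obstacle.} The only real care needed is in the $\psi(q)$ identification. The two-sided sum in \eqref{psi} must be rewritten as the one-sided sum over $j\ge0$, with each triangular number counted once, so that the second sum in the statement carries exactly the factor $2$ and not $4$. Everything else is routine bookkeeping.
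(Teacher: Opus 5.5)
Your proposal is correct and follows essentially the same route as the paper. Both split the alternating series as $S(q)-2S(q^2)$, simplify $(-q;q)_\infty(q;q)_\infty=g_2$ and $(-q;q)_\infty(q^2;q^2)_\infty=g_2^2/g_1=\psi(q)$, and then compare coefficients. Your use of the one-sided form $\psi(q)=\sum_{j\ge0}q^{j(j+1)/2}$ and your keeping of the factor $2$ are both right; the paper writes a two-sided sum for $\psi$ and drops the $2$ in an intermediate display, but its final statement agrees with yours.
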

\begin{proof}
It is easy to see that 
\begin{equation}\label{mt6e2}
    \sum_{m=1}^{\infty}
\dfrac{(-1)^{m-1}q^m}{(1+q^m)^2}
=
\sum_{m=1}^{\infty}\dfrac{q^m}{(1+q^m)^2}
-
2\sum_{m=1}^{\infty}\dfrac{q^{2m}}{(1+q^{2m})^2}.
\end{equation}
Invoking \eqref{i6} and \eqref{i7} in \eqref{mt6e2}, we obtain
\begin{equation}\label{mt6e3}
\sum_{n=1}^{\infty}DSOME(n)q^n
=
(-q;q)_\infty
\left(
(q;q)_\infty
\sum_{n=1}^{\infty}SOME(n)q^n
-
2(q^2;q^2)_\infty
\sum_{n=1}^{\infty}SOME(n)q^{2n}
\right).
\end{equation}
Simplifying  \eqref{mt6e3} and using \eqref{psi}, we obtain
\begin{equation}\label{mt6e4}
    \sum_{n=1}^{\infty}DSOME(n)q^n
=
(q^2;q^2)_\infty
\sum_{n=1}^{\infty}SOME(n)q^n
-
2\psi(q)
\sum_{n=1}^{\infty}SOME(n)q^{2n}.
\end{equation}
Employing \eqref{psi} and \eqref{f} in \eqref{mt6e4}, we obtain
$$
    \sum_{n=1}^{\infty}DSOME(n)q^n
=
\left(
\sum_{j=-\infty}^{\infty}(-1)^j q^{j(3j-1)}
\right)
\left(
\sum_{n=1}^{\infty}SOME(n)q^n
\right)$$\begin{equation}\label{mt6e5}
\hspace{3.4cm}-
\left(
\sum_{j=0}^{\infty}q^{j(j+1)/2}
\right)
\left(
\sum_{n=1}^{\infty}SOME(n)q^{2n}
\right).
\end{equation}
\eqref{mt6e5} is  equivalent to
\begin{align}
\sum_{n=1}^{\infty}DSOME(n)q^n
=&\sum_{n=1}^{\infty}\sum_{\substack{j\in\mathbb Z\\ j(3j-1)\leq n}}
(-1)^j
SOME\bigl(n-j(3j-1)q^n\notag\\
&\label{mt6e6}-
\sum_{n=1}^{\infty}\sum_{\substack{j\geq 0\\ j(j+1)/2\leq n\\ n\equiv j(j+1)/2\pmod 2}}
SOME\left(\dfrac{n-j(j+1)/2}{2}\right)q^{n}.
\end{align}
Comparing the coefficients of like powers of $q$ of \eqref{mt6e6}, we arrive at the desired result. 
\end{proof}

\begin{theorem}
For any integer $n\ge 1$, we have
\begin{align}
\overline{SOME}(n)
&=
SOME(n)+DSOME(n)
+
\sum_{\substack{r=1\\r\neq0}}^{\infty}(-1)^{r+1}
\overline{SOME}\left(n-\dfrac{r(3r-1)}{2}\right)\notag \\
&\label{MT7E1}\hspace{.5cm}+
\sum_{\substack{r=1\\r\neq0}}^{\infty}(-1)^r
SOME\left(n-r(3r-1)\right).\notag
\end{align}
\end{theorem}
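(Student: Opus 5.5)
The plan is to reduce the statement to a single identity between generating functions, $g_1\overline{S}(q)=D(q)+g_2S(q)$, and then check that identity from the three Lambert-series representations. Here $S(q)=\sum_{n\ge0}SOME(n)q^n$, $D(q)=\sum_{n\ge0}DSOME(n)q^n$ and $\overline{S}(q)=\sum_{n\ge1}\overline{SOME}(n)q^n$. We read the sums in the statement as running over all nonzero integers $r$ (this is what the condition $r\neq0$ indicates). We also use the convention that $SOME$ and $\overline{SOME}$ vanish at negative arguments.

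\textbf{Step 1 (reduction).} By Euler's pentagonal number theorem \eqref{f}, $g_1=\sum_{r\in\mathbb Z}(-1)^rq^{r(3r-1)/2}$. Replacing $q$ by $q^2$ gives $g_2=\sum_{r\in\mathbb Z}(-1)^rq^{r(3r-1)}$. Moving the $\overline{SOME}$-sum to the left, the left side becomes $\sum_{r\in\mathbb Z}(-1)^r\overline{SOME}(n-r(3r-1)/2)$, which is the coefficient of $q^n$ in $g_1\overline{S}(q)$. On the right, $SOME(n)+\sum_{r\neq0}(-1)^rSOME(n-r(3r-1))$ is the coefficient of $q^n$ in $g_2S(q)$. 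So the theorem is equivalent to $g_1\overline{S}(q)=D(q)+g_2S(q)$, up to the constant term, which plays no role for $n\ge1$.

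\textbf{Step 2 (common Lambert series).} Put $A(q)=\sum_{m\ge1}q^m/(1+q^m)^2$. Then \eqref{i6} reads $S(q)=A(q)/g_1$. The identity \eqref{mt6e2}, used in the proof of the previous theorem, together with \eqref{i7} gives $D(q)=(-q;q)_\infty\,(A(q)-2A(q^2))$. Splitting $A(q)$ into odd and even $m$ shows that $\sum_{m\ge1}q^{2m-1}/(1+q^{2m-1})^2=A(q)-A(q^2)$. Hence \eqref{ie13} becomes $\overline{S}(q)=2\frac{(-q;q)_\infty}{g_1}(A(q)-A(q^2))$.

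\textbf{Step 3 (verification).} Use $(-q;q)_\infty=g_2/g_1$. Then
\[
D(q)+g_2S(q)=(-q;q)_\infty\bigl(A(q)-2A(q^2)\bigr)+(-q;q)_\infty A(q)=2(-q;q)_\infty\bigl(A(q)-A(q^2)\bigr)=g_1\overline{S}(q),
\]
and comparing coefficients of $q^n$ finishes the proof.

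The main obstacle is the bookkeeping in Step 1, not the algebra. One must interpret the index ranges in the statement correctly: all $r\neq0$, with arguments $n-r(3r-1)/2$ for the overpartition term and $n-r(3r-1)$ for the $SOME$ term. One must also confirm that the signs $(-1)^{r+1}$ and $(-1)^r$ match moving $g_1-1$ to the left and keeping $g_2-1$ on the right. Once that matching is fixed, Steps 2 and 3 are routine.
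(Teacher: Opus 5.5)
Your argument is correct and is essentially the paper's proof. The paper also derives $g_1\overline{S}(q)=g_2S(q)+D(q)$ from the Lambert-series relation $2\sum_{m\ge1}q^{2m-1}/(1+q^{2m-1})^2=A(q)+\sum_{m\ge1}(-1)^{m-1}q^m/(1+q^m)^2$ (this is exactly your $2(A(q)-A(q^2))=A(q)+(A(q)-2A(q^2))$), then expands $g_1,g_2$ by the pentagonal number theorem and compares coefficients; your reading of both sums as running over all nonzero integers $r$ is the one the identity needs (already at $n=3$ the $r=-1$ term $\overline{SOME}(1)$ is required).
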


\begin{proof}
We have
\begin{equation}\label{mt7e1}
2\sum_{m=1}^{\infty}
\dfrac{q^{2m-1}}{(1+q^{2m-1})^2}
=
\sum_{m=1}^{\infty}\frac{q^m}{(1+q^m)^2}
+
\sum_{m=1}^{\infty}
\dfrac{(-1)^{m-1}q^m}{(1+q^m)^2}.    
\end{equation}
Invoking \eqref{ie13} in \eqref{mt7e1} 
$$
\sum_{n=1}^{\infty}\overline{SOME}(n)q^n
=
\dfrac{(-q;q)_\infty}{(q;q)_\infty}
\left(
\sum_{m=1}^{\infty}\dfrac{q^m}{(1+q^m)^2}
+
\sum_{m=1}^{\infty}
\dfrac{(-1)^{m-1}q^m}{(1+q^m)^2}
\right).    
$$
Employing \eqref{i6} and \eqref{i7} in \eqref{mt7e3}, we obtain
\begin{equation}\label{mt7e3}
    \sum_{n=1}^{\infty}\overline{SOME}(n)q^n
=
(-q;q)_\infty\sum_{n=0}^{\infty}SOME(n)q^n
+
\dfrac{1}{(q;q)_\infty}\sum_{n=1}^{\infty}DSOME(n)q^n,
\end{equation}
which  is equivalent to
\begin{equation}\label{mt7e4}
    (q;q)_\infty\sum_{n=1}^{\infty}\overline{SOME}(n)q^n
=
(q^2;q^2)_\infty \sum_{n=0}^{\infty}SOME(n)q^n+\sum_{n=1}^{\infty}DSOME(n)q^n.
\end{equation}
Employing \eqref{f} in \eqref{mt7e4} and applying the Cauchy product of two infinite power series and then comparing the coefficients of the like powers of $q$, we arrive at the desired result.
\end{proof}

In the remaining theorems of this section, we will prove monocity results related to $SOME(n)$ and $\overline{SOME}(n)$ functions. We will using following truncated Pentagonal number theorem  introduced by Andrews and Merca  \cite{m_k}. For integers $n\ge 1$ and $k\ge 1$, 
\begin{equation}\label{mk}
   \dfrac{(-1)^{k-1}}{(q; q)_\infty} \sum_{n=1-k}^{k} (-1)^n q^{n(3n-1)/2} = (-1)^{k-1} + \sum_{n=0}^{\infty} M_k(n) q^n,
\end{equation} where  $M_k(n)$
is the number of partitions of $n$ in which $k$ is the least integer that is not a part and there are more parts $>k$ than there are $<k$.

\begin{theorem}\label{th1}
For any integer $n\geq2$  we have
$$
SOME(n)\geq SOME(n-2).    
$$
\end{theorem}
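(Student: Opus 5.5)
The plan is to work directly from the second form \eqref{Tr} of the generating function of $SOME(n)$ and show that $(1-q^2)\sum_{n\ge0}SOME(n)q^n$ has nonnegative coefficients. Since the coefficient of $q^n$ in this product is exactly $SOME(n)-SOME(n-2)$ for $n\ge 2$, that gives the theorem. Here we use $SOME(0)=0$, consistent with \eqref{Tr} because $T_0=0$.

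Multiplying \eqref{Tr} by $1-q^2$ and cancelling one factor $1-q^2$ from $(q^2;q^2)_\infty$, I would write
\begin{equation*}
\sum_{n=0}^{\infty}\bigl(SOME(n)-SOME(n-2)\bigr)q^n
=\frac{1}{(q^2;q^2)_\infty\,(q^4;q^2)_\infty}\sum_{m=0}^{\infty}T_mq^{T_m},
\end{equation*}
with the convention $SOME(-1)=SOME(-2)=0$. Each of the factors on the right has nonnegative coefficients:
\begin{itemize}
\item $1/(q^2;q^2)_\infty=\sum p(k)q^{2k}$;
\item $1/(q^4;q^2)_\infty=\prod_{j\ge2}(1-q^{2j})^{-1}$ is the generating function for partitions into even parts at least $4$;
\item $\sum_m T_mq^{T_m}$ has coefficients $T_m\ge0$.
\end{itemize}
A product of power series with nonnegative coefficients has nonnegative coefficients. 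Comparing coefficients of $q^n$ for $n\ge2$ then yields $SOME(n)-SOME(n-2)\ge0$.

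The only step needing care is the bookkeeping at the boundary. I need to check that \eqref{Tr} really includes the constant term $SOME(0)=0$, so that the coefficient of $q^n$ on the left is exactly $SOME(n)-SOME(n-2)$ for every $n\ge 2$, including $n=2,3$.

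I do not expect a genuine obstacle. The truncated pentagonal number theorem \eqref{mk} is not needed for this inequality; it would only be relevant for sharper or strict monotonicity statements. If one wanted strictness, one could also note that the right side has a positive coefficient of $q^n$ whenever $n-T_m$ is a nonnegative even integer for some $m\ge1$.
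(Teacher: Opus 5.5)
Your proof is correct, and it is essentially the paper's argument. The paper also multiplies \eqref{Tr} by $1-q^2$ and reads off nonnegative coefficients; the only difference is that it gets the factor $\frac{1-q^2}{(q^2;q^2)_\infty}=1+\sum_{\ell\ge1}M_1(\ell)q^{2\ell}$ from the $k=1$ case of the Andrews--Merca identity \eqref{mk} with $q\to q^2$, whereas you recognize this factor directly as $1/(q^4;q^2)_\infty$, since $M_1(\ell)$ just counts partitions of $\ell$ with no part equal to $1$. Keeping only the constant term of $1/(q^4;q^2)_\infty$ in your product recovers the paper's sharper bound \eqref{mt8e5}, so the citation of \eqref{mk} adds nothing here.
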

\begin{proof}
Replacing $q$ by $q^2$ in \eqref{mk}, we obtain
\begin{equation}\label{mt8e1}
\dfrac{(-1)^{k-1}}{(q^2;q^2)_\infty}
\sum_{j=1-k}^{k}(-1)^j q^{j(3j-1)}
=
(-1)^{k-1}
+
\sum_{\ell\geq 1}M_k(\ell)q^{2\ell}.    
\end{equation}
Multiplying both sides of \eqref{mt8e1} by $(q^2;q^2)_\infty
\sum_{n=0}^{\infty}SOME(n)q^n$, we obtain
\begin{align}
&(-1)^{k-1}
\left(
\sum_{j=1-k}^{k}(-1)^j q^{j(3j-1)}
\right)
\left(
\sum_{n=0}^{\infty}SOME(n)q^n
\right)\notag\\
&\label{mt8e2}=
(-1)^{k-1}(q^2;q^2)_\infty
\sum_{n=0}^{\infty}SOME(n)q^n
+
\left(\sum_{\ell=1}^{\infty}M_k(\ell)q^{2\ell}\right)
\left((q^2;q^2)_\infty
\sum_{n=0}^{\infty}SOME(n)q^n\right).
\end{align}
Employing \eqref{Tr} and \eqref{ie4} in \eqref{mt8e2}, we obtain
\begin{align}
   &(-1)^{k-1}
\left(
\sum_{j=1-k}^{k}(-1)^j q^{j(3j-1)}
\right)
\left(
\sum_{n=0}^{\infty}SOME(n)q^n
\right)\notag\\
&\label{mt8e3}=
(-1)^{k-1}\left(\sum_{m=0}^{\infty}p(m)q^{2m}\right)
\left(\sum_{r=1}^{\infty}T_rq^{T_r}\right)
+
\left(\sum_{\ell=1}^{\infty}M_k(\ell)q^{2\ell}\right)
\left(\sum_{m=0}^{\infty}p(m)q^{2m}\right)
\left(\sum_{r=1}^{\infty}T_rq^{T_r}\right). 
\end{align}
Now applying Cauchy product of infinite power series in \eqref{mt8e3} and then comparing the coefficients of the like powers of $q$,  we obtain
\begin{align}
(-1)^{k-1}
&\left(
\sum_{j=1-k}^{k}(-1)^j
SOME\bigl(n-j(3j-1)\bigr)
-
\sum_{\substack{r\geq 1\\ T_r\leq n\\ n-T_r\equiv 0\pmod 2}}
T_r\,p\!\left(\dfrac{n-T_r}{2}\right)
\right)\notag\\
&\label{mt8e4}=\sum_{\ell=1}^{\lfloor n/2\rfloor}M_k(\ell)\sum_{\substack{r\geq1\\ T_r\leq n-2\ell\\ n-2\ell-T_r\equiv0\pmod 2}}
T_r\,p\!\left(\dfrac{n-2\ell-T_r}{2}\right).
\end{align}
Putting  $k=1$ in \eqref{mt8e4} for $n\geq2$, we obtain
\begin{equation}\label{mt8e5}
    SOME(n)-SOME(n-2)\geq\sum_{\substack{r\geq 1\\ T_r\leq n\\ n-T_r\equiv 0\pmod 2}}
T_r\,p\!\left(\dfrac{n-T_r}{2}\right)\geq0.
\end{equation}
The desired result now follows from \eqref{mt8e5}. 
\end{proof}

 Following corollaries follow immediately from Theorem  \ref{th1}: 
\begin{corollary}  For any integer $n$, $SOME(n+1)\ngeq SOME(n)$.
	\end{corollary}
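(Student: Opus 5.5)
The corollary should be read as saying that the two-step monotonicity of Theorem~\ref{th1} does \emph{not} improve to one-step monotonicity. In other words, the inequality $SOME(n+1)\geq SOME(n)$ fails for some $n$, so $SOME$ is not a non-decreasing sequence. My plan is therefore not a general argument but a small explicit counterexample, computed from the identity already proved in the first theorem of this section:
$$
SOME(n)=np(n)-4\sum_{i=1}^{\lfloor n/2\rfloor}p(n-2i)\sigma(i).
$$

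The first step is to tabulate a few values using $p(0),\dots,p(4)=1,1,2,3,5$ and $\sigma(1)=1$, $\sigma(2)=3$.
\begin{itemize}
\item For $n=1$ the sum is empty, so $SOME(1)=1\cdot p(1)=1$.
\item For $n=2$ we get $SOME(2)=2\cdot 2-4\,p(0)\sigma(1)=4-4=0$.
\item For $n=3$ we get $SOME(3)=3\cdot 3-4\,p(1)\sigma(1)=9-4=5$.
\item For $n=4$ we get $SOME(4)=4\cdot 5-4\bigl(p(2)\sigma(1)+p(0)\sigma(2)\bigr)=20-4(2+3)=0$.
\end{itemize}

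As a check, I would also compute these values directly from the definition. For $n=2$, the partitions are $2$ and $1+1$, contributing $-2$ and $+2$, so $SOME(2)=0$. For $n=4$, the five partitions $4$, $3+1$, $2+2$, $2+1+1$, $1+1+1+1$ contribute $-4,\,4,\,-4,\,0,\,4$, which sum to $0$. Both agree with the formula.

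These values give $SOME(2)=0<1=SOME(1)$ and $SOME(4)=0<5=SOME(3)$. So $SOME(n+1)\geq SOME(n)$ fails already at $n=1$ and again at $n=3$, which proves the corollary. Theorem~\ref{th1} remains consistent with this, since $SOME(3)\geq SOME(1)$ and $SOME(4)\geq SOME(2)$.

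The only real obstacle is interpreting the statement, since "$SOME(n+1)\ngeq SOME(n)$ for any $n$" read literally would claim failure for \emph{every} $n$. That stronger claim I would not attempt: the first-theorem formula suggests the leading term $np(n)$ eventually dominates and the sequence grows. I would therefore state explicitly that the intended meaning is the negation of the universal inequality, and then give the counterexample above.
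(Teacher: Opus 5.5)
Your argument is correct, but it does not follow the paper's route. The paper offers nothing beyond the remark that this corollary ``follows immediately from Theorem~\ref{th1}'', and that remark cannot actually establish the claim. Theorem~\ref{th1} only gives the lower bound $SOME(n)-SOME(n-2)\geq 0$, which every non-decreasing sequence also satisfies. So no two-step information of this kind can show that one-step monotonicity fails; a negative statement like this needs an explicit witness. Your values $SOME(2)=0<1=SOME(1)$ and $SOME(4)=0<5=SOME(3)$ supply exactly that missing ingredient. You obtained them from the identity of the first theorem of Section~3 and cross-checked them against the definition. They can also be read off \eqref{Tr}: for even $n$ only even triangular numbers contribute, and the only one not exceeding $4$ is $T_0=0$, which carries weight $0$. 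Meanwhile $SOME(1)=1$ and $SOME(3)=5$ come from $T_1=1$ and $T_2=3$.

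Two adjustments to the framing. First, you need not merely decline the literal ``for every $n$'' reading: your own table refutes it, since $SOME(3)=5>0=SOME(2)$. You can therefore say outright that the negation of universal one-step monotonicity is the only reading under which the corollary is true. Second, drop the heuristic that $np(n)$ eventually dominates. The subtracted term $4\sum_i p(n-2i)\sigma(i)$ cancels $np(n)$ to leading order: as $q=e^{-t}\to 1^-$, both $\sum_j\sigma(j)q^j$ and $4\sum_i\sigma(i)q^{2i}$ behave like $\pi^2/(6t^2)$. Consequently $SOME(n)$ is smaller than $np(n)$ by a factor of order $n^{-1/2}$. Numerically, the one-step failures occur at every odd $n\leq 15$ (e.g. $SOME(16)=316<320=SOME(15)$) and then stop, at least through $n=30$. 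None of this is needed for the proof.
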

\begin{corollary}
    The sequences $\lbrace SOME(2n)\rbrace_{n\geq1}$ and $\lbrace SOME(2n-1)\rbrace_{n\geq1}$ are monotonic increasing sequences.
\end{corollary}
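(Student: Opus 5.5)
The plan is to read the corollary off directly from Theorem \ref{th1}, by specializing its index separately to even and to odd arguments. No new generating-function manipulation is needed. Here ``monotonic increasing'' is understood as non-decreasing, which is exactly what the inequality $SOME(n)\geq SOME(n-2)$ of Theorem \ref{th1} provides.

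For the even subsequence, I will apply Theorem \ref{th1} with $n$ replaced by $2n$, where $n\geq 2$. This is admissible since $2n\geq 4\geq 2$, and it gives $SOME(2n)\geq SOME(2(n-1))$ for every $n\geq 2$. Hence $SOME(2)\leq SOME(4)\leq SOME(6)\leq\cdots$.

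For the odd subsequence, I will apply Theorem \ref{th1} with $n$ replaced by $2n+1$, where $n\geq 1$. This is admissible since $2n+1\geq 3$, and it gives $SOME(2n+1)\geq SOME(2n-1)$ for every $n\geq1$. Hence $SOME(1)\leq SOME(3)\leq SOME(5)\leq\cdots$.

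The only point needing care is the range of indices: the hypothesis $n\geq 2$ in Theorem \ref{th1} must cover every consecutive pair in each subsequence, and the two substitutions above check exactly this. If one also wants strictness from some point on, I would use the explicit lower bound \eqref{mt8e5}, namely
\[
SOME(m)-SOME(m-2)\geq \sum_{\substack{r\geq1\\ T_r\leq m\\ m\equiv T_r \pmod 2}} T_r\,p\!\left(\frac{m-T_r}{2}\right).
\]
This sum has at least one positive term as soon as some triangular number $T_r\leq m$ has the same parity as $m$. For odd $m$ the term $T_1=1$ works, so the inequality is strict for all odd $m\geq 3$. For even $m$ the term $T_3=6$ works, so the inequality is strict for all even $m\geq 6$. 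This refinement is optional and is not required by the statement.
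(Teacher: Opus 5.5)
Your proposal is correct and matches the paper, which simply reads the corollary off from Theorem~\ref{th1} by restricting to even and to odd indices; your index ranges ($2n\ge 4$ and $2n+1\ge 3$) are right. Reading ``monotonic increasing'' as non-decreasing is actually necessary, since $SOME(2)=SOME(4)=0$, and your optional strictness refinement via \eqref{mt8e5} (strict for odd $m\ge 3$ and for even $m\ge 6$) is also correct.
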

\begin{corollary}
   For  ordinary partitions of a positive integer $n$,  sum of all odd parts  in all partitions of $n$ is greater than or equal to the sum of all even parts.
\end{corollary}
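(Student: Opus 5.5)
The corollary is the statement $SOME(n)\geq 0$ for every $n\geq 1$. I would prove it by induction on $n$ in steps of $2$, using Theorem~\ref{th1} as the inductive step and checking two base cases by hand.

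For the base cases, $n=1$ has the single partition $1$, so $SOME(1)=1\geq 0$. For $n=2$ the partitions are $2$ and $1+1$, giving $SOME(2)=-2+2=0\geq 0$. The empty partition convention $SOME(0)=0$ could serve as the even base instead. For the inductive step, take $n\geq 3$ and assume $SOME(n-2)\geq 0$. Theorem~\ref{th1} gives $SOME(n)\geq SOME(n-2)\geq 0$. This covers the even and odd residue classes separately, which is exactly the content of the preceding corollary on monotonicity of $\{SOME(2n)\}$ and $\{SOME(2n-1)\}$. Since $SOME(n)$ is by definition the total of odd parts minus the total of even parts over all partitions of $n$, nonnegativity is the claimed inequality.

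As an independent check, and a more self-contained argument, I would also note that \eqref{Tr} writes the generating function as
$$\frac{1}{(q^2;q^2)_\infty^2}\sum_{m\geq 0}T_mq^{T_m}.$$
Here $1/(q^2;q^2)_\infty^2=\sum_{n\ge0}\bigl(\sum_{a+b=n}p(a)p(b)\bigr)q^{2n}$ has nonnegative coefficients, and so does $\sum T_mq^{T_m}$. Their product therefore has nonnegative coefficients, giving $SOME(n)\geq 0$ directly.

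The only delicate point is the inductive step, which depends on the reliability of Theorem~\ref{th1}. In its proof, the $k=1$ case of \eqref{mt8e4} must actually yield a nonnegative lower bound. If there were any doubt there, I would rely on the coefficient-positivity argument from \eqref{Tr}, which needs nothing beyond that identity.
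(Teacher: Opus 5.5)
Your induction is the paper's argument. The paper sums the chain $SOME(n)-SOME(n-2)\geq 0$, $SOME(n-2)-SOME(n-4)\geq 0,\dots$ obtained from \eqref{mt8e5}, and leaves the base values $SOME(0)=0$ and $SOME(1)=1$ implicit; you supply them explicitly. Your backup argument from \eqref{Tr} is also correct and is the more direct route: Theorem~\ref{th1} (its $k=1$ case) amounts to nonnegativity of the coefficients of $(1-q^2)\sum_{n\geq0}SOME(n)q^n=\frac{1}{(q^2;q^2)_\infty(q^4;q^2)_\infty}\sum_{m\geq0}T_mq^{T_m}$, so the step you flagged as delicate has no hidden gap.
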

\begin{proof}
    Using \eqref{mt8e5}, for $n\geq2$, we obtain
    \begin{align}
        &SOME(n)-SOME(n-2)\geq0,\notag\\
        &SOME(n-2)-SOME(n-4)\geq0,\notag\\
        &SOME(n-4)-SOME(n-6)\geq0,\notag\\
       &\hspace{4cm}\vdots\notag
         \end{align}
         Summing all of the above inequalities, we arrive at the desired result.
\end{proof}

\begin{theorem}\label{th2}
For any integer $n\geq2,$ we have
$$
\overline{SOME}(n)
\geq
\overline{SOME}(n-2).  
$$
\end{theorem}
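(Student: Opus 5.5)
The plan is to follow the proof of Theorem \ref{th1}: write the generating function of $\overline{SOME}(n)$ as $1/(q^2;q^2)_\infty$ times a series with nonnegative coefficients. Multiplying by $1-q^2$ should then leave a series whose coefficients are all nonnegative. Since the coefficient of $q^n$ in $(1-q^2)\sum_n\overline{SOME}(n)q^n$ is exactly $\overline{SOME}(n)-\overline{SOME}(n-2)$, this gives the theorem.

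First, start from \eqref{mt3e1}. Replacing $q$ by $-q^2$ in \eqref{phi} gives $\phi(-q^2)=g_2^2/g_4$, since $(q^2;q^2)_\infty(-q^2;q^2)_\infty$ equals $g_4$ (the $g_8$ factors cancel). Hence
$$
\sum_{n=1}^{\infty}\overline{SOME}(n)q^n=\dfrac{2g_4^2}{g_2^4}\sum_{k=1}^{\infty}k^2q^{k^2}
=\dfrac{1}{g_2}\cdot\dfrac{g_4^2}{g_2^3}\cdot 2\sum_{k=1}^{\infty}k^2q^{k^2}.
$$
Next, replacing $q$ by $q^2$ in \eqref{psi} gives $\psi(q^2)=g_4^2/g_2$. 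Therefore $g_4^2/g_2^3=\psi(q^2)\cdot\dfrac{1}{g_2^2}$. Both factors have nonnegative coefficients: $\psi(q^2)$ is a sum of monomials $q^{m(m+1)}$, and $1/g_2^2$ is the square of the partition generating function \eqref{ie4} in $q^2$. So
$$
F(q):=\dfrac{2\psi(q^2)}{g_2^2}\sum_{k\geq1}k^2q^{k^2}=\sum_{n\ge0}c(n)q^n,\qquad c(n)\ge 0 .
$$

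Then I would multiply by $1-q^2$. Since $(1-q^2)/(q^2;q^2)_\infty=1/(q^4;q^2)_\infty=\sum_{m\ge0}a(m)q^{2m}$, where $a(m)$ counts partitions of $m$ with no part equal to $1$, we have $a(m)\ge0$. Therefore
$$
\sum_{n\ge1}\bigl(\overline{SOME}(n)-\overline{SOME}(n-2)\bigr)q^n=\Bigl(\sum_{m\ge0}a(m)q^{2m}\Bigr)F(q),
$$
with the convention $\overline{SOME}(0)=\overline{SOME}(-1)=0$. The right-hand side is a Cauchy product of two series with nonnegative coefficients, so comparing coefficients of $q^n$ gives $\overline{SOME}(n)-\overline{SOME}(n-2)=\sum_{m}a(m)c(n-2m)\ge0$ for $n\ge2$.

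Alternatively, to match the paper's style, one can use \eqref{mk} with $q\to q^2$ and $k=1$, exactly as in \eqref{mt8e1}–\eqref{mt8e5}, with $F(q)$ playing the role of $\sum_r T_rq^{T_r}$. The only substantive step, and the one I would check most carefully, is the nonnegativity of the cofactor $g_4^2/g_2^3$. This rests on recognizing it as $\psi(q^2)/g_2^2$ and on getting the theta-product identity for $\phi(-q^2)$ right. The rest is bookkeeping with Cauchy products.
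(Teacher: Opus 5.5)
Your proof is correct and has the same structure as the paper's. Both write $\sum_{n}\overline{SOME}(n)q^n=U(q)/(q^2;q^2)_\infty$ with the identical cofactor $U(q)=(q^2;q^2)_\infty\sum_n\overline{SOME}(n)q^n$, then multiply by $1-q^2$ (the $k=1$ case of \eqref{mk} with $q\to q^2$). The only difference is how you show $U$ has nonnegative coefficients: you go through the quoted identity \eqref{mt3e1} and $g_4^2/g_2^3=\psi(q^2)/g_2^2$, whereas the paper reads it straight off \eqref{ie13} as $U(q)=2(-q;q)_\infty^2\sum_{m\ge1}q^{2m-1}(1+q^{2m-1})^{-2}=2\sum_{m\ge1}q^{2m-1}\prod_{r\neq 2m-1}(1+q^r)^2$, which needs no theta-function identity.
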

\begin{proof} From \eqref{ie13} can be written as
Let 
\begin{equation}\label{mt9e1}
  \sum_{n= 0}^\infty\overline{SOME}(n)q^n= \dfrac{U(q)}{(q^2;q^2)_\infty},
\end{equation}
where 
$$
    U(q)
=
2(-q;q)_\infty^2
\sum_{m=1}^{\infty}
\dfrac{q^{2m-1}}{(1+q^{2m-1})^2} =2\sum_{m=1}^{\infty}
q^{2m-1}
\prod_{\substack{r=1\\ r\neq 2m-1}}^{\infty}
(1+q^r)^2,
$$
So for any integer  $n\geq0$, the coefficient of $q^n$ in $U(q),$ is 
$$
   U(n)\geq0,
$$
Multiplying \eqref{mt8e1}  by $U(q)$  and simplifying using \eqref{mt9e1}, we obtain
\begin{equation}\label{mt9e4}
(-1)^{k-1}
\left(
\sum_{j=1-k}^{k}
(-1)^j q^{j(3j-1)}
\right)
\left(
\sum_{n=0}^{\infty}\overline{SOME}(n)q^n
\right)
=
(-1)^{k-1}U(q)
+
\left(
\sum_{\ell=1}^{\infty}M_k(\ell)q^{2\ell}
\right)U(q). 
\end{equation}
Comparing the coefficient of $q^n$ on both sides of  \eqref{mt9e4}, we obtain
$$
(-1)^{k-1}
\sum_{j=1-k}^{k}
(-1)^j
\overline{SOME}\bigl(n-j(3j-1)\bigr)=(-1)^{k-1}U(n)+\sum_{\ell=1}^{\infty}M_k(\ell)U(n-2\ell).
$$
Therefore,
\begin{equation}\label{mt9e6}
(-1)^{k-1}
\left(
\sum_{j=1-k}^{k}
(-1)^j
\overline{SOME}\bigl(n-j(3j-1)\bigr)
-
U(n)
\right)
\geq 0.    
\end{equation}
Settying $k=1,$ in \eqref{mt9e6}, we obtain
\begin{equation}\label{mt9e7}
   \overline{SOME}(n)
-
\overline{SOME}(n-2)
-
U(n)
\geq 0.
\end{equation}
The desired result now easily follows from \eqref{mt9e7}.
\end{proof}
 Following corollaries are easy consequences of Theorem \ref{th2}. 
\begin{corollary} For any integer $n$, $\overline{SOME}(n+1)\ngeq \overline{SOME}(n)$.
\end{corollary}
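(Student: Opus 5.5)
The statement should be read, as in the analogous corollary for $SOME(n)$ after Theorem~\ref{th1}, as saying that the inequality $\overline{SOME}(n+1)\geq \overline{SOME}(n)$ does \emph{not} hold for all $n$. In other words, Theorem~\ref{th2} cannot be strengthened from steps of size $2$ to steps of size $1$. So the plan is to exhibit one explicit $n$ with $\overline{SOME}(n+1)<\overline{SOME}(n)$; no further structural argument is needed.

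First, I would compute the smallest values directly from the definition.
\begin{itemize}
\item For $n=1$ the overpartitions are $1$ and $\overline{1}$. Each contributes an odd part of size $1$, so $\overline{SOME}(1)=2$.
\item For $n=2$ the overpartitions are $2$, $\overline{2}$, $1+1$ and $\overline{1}+1$. The two partitions with part $2$ contribute $-2-2$, and the two partitions into $1$'s contribute $+2+2$, so $\overline{SOME}(2)=0$.
\end{itemize}
Hence $\overline{SOME}(2)=0<2=\overline{SOME}(1)$, which is the required failure of one-step monotonicity.

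Second, as a check, I would confirm these values from the generating function \eqref{ie13}. Expanding to order $q^2$ gives $\frac{(-q;q)_\infty}{(q;q)_\infty}=1+2q+4q^2+\cdots$ and $\sum_{m\ge1} \frac{q^{2m-1}}{(1+q^{2m-1})^2}=q-2q^2+\cdots$. Their product times $2$ is $2q+0\cdot q^2+\cdots$, agreeing with the enumeration. The values are also consistent with the theorem giving $\overline{SOME}(n)\equiv 2\pmod 8$ exactly at odd squares: $1$ is an odd square, and $\overline{SOME}(2)=0\equiv 0\pmod 8$.

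The only real obstacle is interpretive. Read literally with ``for any integer $n$'', the assertion would claim the inequality fails for every $n$, which is not what the small-value evidence supports. I would therefore state explicitly that the corollary asserts that $\{\overline{SOME}(n)\}$ is not monotone nondecreasing, and the counterexample $n=1$ settles this. By Theorem~\ref{th2}, the even- and odd-indexed subsequences are each nondecreasing, so the non-monotonicity comes entirely from interleaving the two subsequences.
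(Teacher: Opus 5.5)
Your argument is correct, but it is not the route the paper indicates. The paper gives no computation; it simply asserts that this corollary is an easy consequence of Theorem~\ref{th2}. That attribution does not actually suffice. Every nondecreasing sequence satisfies $\overline{SOME}(n)\geq\overline{SOME}(n-2)$, so that inequality can never, by itself, rule out $\overline{SOME}(n+1)\geq\overline{SOME}(n)$ holding for all $n$. Some explicit value information is indispensable, and your enumeration $\overline{SOME}(1)=2>0=\overline{SOME}(2)$ supplies exactly that. You confirm it correctly from \eqref{ie13}, and it can equally be checked from \eqref{mt3e1}.

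Your reading of the quantifier as ``not for all $n$'' is also the only tenable one, and you can state it more sharply than by appealing to ``small-value evidence.'' The same expansion gives $\overline{SOME}(3)=8$: there are eight overpartitions of $3$, with odd-part total $16$ and even-part total $8$. Hence $\overline{SOME}(3)\geq\overline{SOME}(2)$, and the literal statement ``the inequality fails for every $n$'' is false.

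So your self-contained computation is what actually proves the corollary. Theorem~\ref{th2} plays only the secondary role you give it: it shows that the failure of one-step monotonicity comes from interleaving two nondecreasing subsequences.
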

\begin{corollary}
    The sequences $\lbrace \overline{SOME}(2n)\rbrace_{n\geq1}$ and $\lbrace \overline{SOME}(2n-1)\rbrace_{n\geq1}$ are monotonic increasing sequences.
\end{corollary}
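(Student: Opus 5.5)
The plan is to read this off directly from Theorem \ref{th2}, the inequality $\overline{SOME}(n)\geq\overline{SOME}(n-2)$ for every integer $n\geq 2$. The key observation is that the shift $n\mapsto n-2$ preserves parity. So the single inequality of Theorem \ref{th2} splits into one chain along the even integers and one chain along the odd integers, and these are exactly the two subsequences in the statement. Here ``monotonic increasing'' is read as non-decreasing, which is what Theorem \ref{th2} delivers.

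For the even subsequence, fix $n\geq 2$ and apply Theorem \ref{th2} with $2n$ in place of $n$. This is allowed since $2n\geq 4\geq 2$, and it gives $\overline{SOME}(2n)\geq\overline{SOME}(2n-2)$. Letting $n$ run over $n\geq 2$ gives the chain
\[
\overline{SOME}(2)\leq\overline{SOME}(4)\leq\overline{SOME}(6)\leq\cdots,
\]
so $\lbrace\overline{SOME}(2n)\rbrace_{n\geq1}$ is non-decreasing.

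For the odd subsequence, fix $n\geq 1$ and apply Theorem \ref{th2} with $2n+1$ in place of $n$. This is allowed since $2n+1\geq 3\geq 2$, and it gives $\overline{SOME}(2n+1)\geq\overline{SOME}(2n-1)$. Hence $\overline{SOME}(1)\leq\overline{SOME}(3)\leq\overline{SOME}(5)\leq\cdots$, so $\lbrace\overline{SOME}(2n-1)\rbrace_{n\geq1}$ is non-decreasing. The only care needed is the index bookkeeping: each application must have its argument at least $2$, which holds in both chains as checked above.

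No step is a genuine obstacle; all the analytic work was done in the proof of Theorem \ref{th2}. If one wanted strict increase, the natural route would be \eqref{mt9e7}, which gives $\overline{SOME}(n)-\overline{SOME}(n-2)\geq U(n)$. It would then remain to show that $U(n)>0$ for $n\geq 1$, using the product expansion of $U(q)$. For instance, the term with $m=1$ contributes $2q\prod_{r\geq2}(1+q^r)^2$, whose coefficients are positive in all degrees $n\geq 1$ except $n=2$, and the degree-$2$ coefficient must be checked separately. This refinement goes beyond what the corollary claims, so I would state the result in the non-decreasing sense and rely only on Theorem \ref{th2}.
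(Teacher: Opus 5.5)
Your proof is correct and follows the paper's route: the paper states this corollary as an immediate consequence of Theorem \ref{th2}, i.e.\ you apply $\overline{SOME}(n)\geq\overline{SOME}(n-2)$ separately along even and odd $n$, reading ``increasing'' as non-decreasing. One small correction to your optional aside: $U(2)=0$, so $U(n)>0$ fails at $n=2$; however, strict increase of the two subsequences only needs $U(n)>0$ for $n\geq 3$, and the $m=1$ term already gives that.
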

\begin{corollary}Sum of all odd parts  is greater than or equal to the sum of all even parts for 
overpartitions of a positive integer $n$. 
\end{corollary}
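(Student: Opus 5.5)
The plan is to follow the route used for the analogous corollary after Theorem \ref{th1}: telescope the inequality $(\ref{mt9e7})$ down to the smallest argument in the same parity class, and then check the base values directly. By $(\ref{mt9e7})$, for every $n\ge 2$ we have
$$\overline{SOME}(n)-\overline{SOME}(n-2)\ge U(n)\ge 0,$$
where $U(n)$ is the (nonnegative) coefficient of $q^n$ in $U(q)$.

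Fix $n\ge 1$ and write these inequalities for $n, n-2, n-4,\dots$, stopping at the last index that is still $\ge 2$. Summing them telescopes to
$$\overline{SOME}(n)\ge \overline{SOME}(n_0),$$
where $n_0\in\{0,1\}$ has the parity of $n$. So the whole claim reduces to the two base cases $\overline{SOME}(0)\ge 0$ and $\overline{SOME}(1)\ge 0$.

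The base values are elementary.
\begin{itemize}
\item For $\overline{SOME}(0)$: the generating function $(\ref{ie13})$ has no constant term, so $\overline{SOME}(0)=0$. Equivalently, the empty overpartition has no parts.
\item For $\overline{SOME}(1)$: the overpartitions of $1$ are $1$ and $\overline{1}$, each contributing the odd part $1$, so $\overline{SOME}(1)=2\ge 0$. One can also read this off $(\ref{ie13})$: the coefficient of $q$ is $2$.
\end{itemize}
Hence $\overline{SOME}(n)\ge 0$ for all $n$. Since $\overline{SOME}(n)$ is, by definition, the total of odd parts minus the total of even parts over all overpartitions of $n$, this is exactly the statement.

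The only step needing care is the range of validity of $(\ref{mt9e7})$ at small $n$, namely $n=2$ and $n=3$, where the sum in $(\ref{mt9e4})$ with $k=1$ involves only $j=0,1$. There one has to make sure that $\overline{SOME}(n-2)$ is read with the convention $\overline{SOME}(m)=0$ for $m<0$ and with the correct value at $m=0$. I would first check directly that $\overline{SOME}(2)\ge \overline{SOME}(0)=0$ and $\overline{SOME}(3)\ge \overline{SOME}(1)=2$, for instance via $\overline{SOME}(2)=2$ from $(\ref{mt3e1})$, so that the telescoping chain is anchored correctly.

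Alternatively, one could bypass the telescoping entirely. Since $\sum_{n}\overline{SOME}(n)q^n=U(q)/(q^2;q^2)_\infty$ and both $U(q)$ and $1/(q^2;q^2)_\infty$ have nonnegative coefficients, nonnegativity of $\overline{SOME}(n)$ follows at once.
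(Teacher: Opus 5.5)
Your proposal is correct and follows essentially the paper's route. The corollary is meant to follow by telescoping $\overline{SOME}(n)-\overline{SOME}(n-2)\ge U(n)\ge 0$ from Theorem \ref{th2}, as the paper does for $SOME(n)$; your explicit base cases $\overline{SOME}(0)=0$, $\overline{SOME}(1)=2$ (or the shortcut that $U(q)$ and $1/(q^2;q^2)_\infty$ both have nonnegative coefficients) supply what the paper leaves implicit.

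One slip: $\overline{SOME}(2)=0$, not $2$. The overpartitions $2$, $\overline{2}$, $1+1$, $\overline{1}+1$ contribute $-2-2+2+2$, and the coefficient of $q^2$ in \eqref{mt3e1} vanishes. This is harmless, since only $\overline{SOME}(2)\ge 0$ is needed.
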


\section{General analogue $S_{\mathcal P}(n)$ of $SOME(n)$ function }

Let $\lambda$ be any partition of a particular type $\mathcal{P}$ of a positive integer $n$. Define  the function $\omega(\lambda)$ by
$$
	\omega(\lambda)
	:=
	\sum_{\substack{\text{odd parts of }\lambda}} \text{part}
	-
	\sum_{\substack{\text{even parts of }\lambda}} \text{part}. $$
Let $\mathcal A(n)$ be the family of partitions of $n$ the type $\mathcal{P}$ such that $\lambda\in\mathcal A(n)$. Define the general analogue of $SOME(n)$, denoted by $S_{\mathcal P}(n)$,  as
\begin{equation}\label{b5}
	S_{\mathcal P}(n)
	=
	\sum_{\lambda\in \mathcal A(n)} \omega(\lambda).   
\end{equation}

\begin{theorem}Let $\lambda$ be any partition of a particular type $\mathcal{P}$ of a positive integer $n$. Then
	\begin{equation}\label{b3}
		\omega(\lambda)\equiv n\pmod 4.    
	\end{equation}
\end{theorem}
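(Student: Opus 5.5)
The plan is to compare $\omega(\lambda)$ with the total $n$ of the parts of $\lambda$. The argument is purely arithmetic. It uses nothing about the type $\mathcal P$ beyond the fact that $\lambda$ is a partition of $n$, so it holds for every family $\mathcal A(n)$ at once.

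Write $O(\lambda)$ for the sum of the odd parts of $\lambda$ and $E(\lambda)$ for the sum of the even parts, counted with multiplicity. Every part of $\lambda$ is either odd or even, and the parts sum to $n$, so
\begin{equation*}
O(\lambda)+E(\lambda)=n .
\end{equation*}
By the definition of $\omega$, we have $\omega(\lambda)=O(\lambda)-E(\lambda)$. Eliminating $O(\lambda)$ gives
\begin{equation*}
\omega(\lambda)=n-2E(\lambda).
\end{equation*}

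Next, $E(\lambda)$ is a sum of even integers, so $E(\lambda)\equiv 0\pmod 2$. Hence $2E(\lambda)\equiv 0\pmod 4$. Substituting into the previous display yields $\omega(\lambda)\equiv n\pmod 4$, which is \eqref{b3}.

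There is no real obstacle. The only point needing care is bookkeeping: repeated parts, and in the overpartition or coloured settings overlined or coloured copies, must be counted with multiplicity in both $O$ and $E$. Then $O+E=n$ holds exactly, since the decoration does not change a part's numerical value or parity. The case with no even parts ($E=0$) is included trivially.
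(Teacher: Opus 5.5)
Your proposal is correct and follows essentially the same route as the paper. Both arguments write $O(\lambda)+E(\lambda)=n$ and $O(\lambda)-E(\lambda)=\omega(\lambda)$, derive $\omega(\lambda)=n-2E(\lambda)$, and use that $E(\lambda)$ is even, so $2E(\lambda)\equiv 0 \pmod 4$ (the paper phrases this as $E(\lambda)=2H(\lambda)$, giving $\omega(\lambda)=n-4H(\lambda)$).
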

\begin{proof}
	For any  partition $\lambda$ of a particular type $\mathcal{P}$ of a positive integer $n$.  let
	$$
		O(\lambda):=\text{sum of odd parts of }\lambda
		\quad \mbox{and}\quad
		E(\lambda):=\text{sum of even parts of }\lambda.    
$$
	Then, clearly
	\begin{equation}\label{b1}
		O(\lambda)+E(\lambda)=n   
	\end{equation}
	and
	\begin{equation}\label{b2}
		O(\lambda)-E(\lambda)=\omega(\lambda).    
	\end{equation}
	Subtracting \eqref{b1} from \eqref{b2}, we obtain
	\begin{equation}\label{h1}
		\omega(\lambda)
		=
		n-2E(\lambda).    
	\end{equation}
	$E(\lambda)$  being a i sum of even parts,  $E(\lambda)$ is even. So, we can write
	$$
		E(\lambda)=2H(\lambda),    
	$$
	where
	\begin{equation}\label{h}
		H(\lambda)=\frac12 E(\lambda).    
	\end{equation}
	Therefore, employing \eqref{h} in  \eqref{h1}, we obtain
	\begin{equation}\label{h2}
		\omega(\lambda)=n-4H(\lambda).    
	\end{equation}
	The desired result now follows immediately from \eqref{h2}. 
\end{proof}

\begin{corollary}
	Let $\mathcal A(4n)$ be a family of partitions of $4n$ of a particular type $\mathcal P$. Then
	\begin{equation}\label{4n}
		S_{\mathcal P}(4n)\equiv 0 \pmod 4
	\end{equation}
	\begin{equation}\label{2n}
		S_{\mathcal P}(2n)\equiv 0 \pmod 2.
	\end{equation}
\end{corollary}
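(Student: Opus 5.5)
The plan is to deduce both congruences directly from the preceding theorem, namely $\omega(\lambda)\equiv n\pmod 4$ for every partition $\lambda$ of $n$ of type $\mathcal P$. This gives the statement for every family $\mathcal A(n)$, with no generating functions needed. Throughout, I assume $\mathcal A(n)$ is finite, which holds automatically since $n$ has only finitely many partitions.

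For \eqref{4n}, apply \eqref{b3} with $n$ replaced by $4n$. Every $\lambda\in\mathcal A(4n)$ then satisfies $\omega(\lambda)\equiv 4n\equiv 0\pmod 4$. Summing over $\lambda\in\mathcal A(4n)$ in the definition \eqref{b5} gives $S_{\mathcal P}(4n)\equiv 0\pmod 4$, since a finite sum of multiples of $4$ is a multiple of $4$.

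For \eqref{2n}, apply \eqref{b3} with $n$ replaced by $2n$ to get $\omega(\lambda)\equiv 2n\pmod 4$. Reducing modulo $2$ gives $\omega(\lambda)\equiv 0\pmod 2$. Alternatively, \eqref{h2} gives $\omega(\lambda)=2n-4H(\lambda)$, which is even. Summing over $\mathcal A(2n)$ as before yields $S_{\mathcal P}(2n)\equiv 0\pmod 2$.

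One could hope for the stronger statement $S_{\mathcal P}(2n)\equiv 2n\,|\mathcal A(2n)|\pmod 4$, and more generally $S_{\mathcal P}(n)\equiv n\,|\mathcal A(n)|\pmod 4$. I would note this as a remark, but it is not needed. There is no real obstacle: the only point requiring care is that the congruence in \eqref{b3} holds termwise, uniformly in $\lambda$, so it passes to the finite sum $S_{\mathcal P}$.
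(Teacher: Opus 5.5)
Your proposal is correct and matches the paper's proof: apply the termwise congruence $\omega(\lambda)\equiv n\pmod 4$ at $4n$ (respectively $2n$, then reduce modulo $2$) and sum over the finite family in the definition of $S_{\mathcal P}$. Your side remark $S_{\mathcal P}(n)\equiv n\,|\mathcal A(n)|\pmod 4$ follows immediately from that same termwise congruence, so it is a proven fact rather than something one only ``could hope for''.
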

\begin{proof}
	For each partition $\lambda\in \mathcal A(4n)$, from \eqref{b3} it follows that 
	\begin{equation}\label{b4}
		\omega(\lambda)\equiv 4n\equiv 0 \pmod 4.    
	\end{equation}
	Emplyoing \eqref{b4} in \eqref{b5}. we arrive at \eqref{4n}.
	Sinmilarly,  \eqref{2n} can be proved.
\end{proof}

\begin{theorem}
	Let $\mathcal A(n)$ be any family of partitions of $n$ of partition type $\mathcal{P}$. For
	$r=0,1,\ldots,5$, let $T_r(n)$ denote the total number of parts congruent to
	$r \pmod 6$ occurring in all objects of $\mathcal A(n)$, counted with multiplicity. Then the analogue of $SOME(n),$ $S_{\mathcal P}(n)$  on $\mathcal{P}$ satisfies
	$
	S_{\mathcal P}(n)
	\equiv
	0
	\pmod 3$ if and only if $T_1(n)+T_2(n)\equiv T_4(n)+T_5(n)\pmod 3.
	$
\end{theorem}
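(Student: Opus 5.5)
We work directly from the definition \eqref{b5} and compute everything modulo $3$ by classifying parts according to their residue modulo $6$. Since the parity of a part and its residue modulo $3$ are both determined by its residue modulo $6$, this classification is exactly what is needed.

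\textbf{Step 1: contribution of a single part.} For a part $a$, its contribution to $\omega(\lambda)$ is $(-1)^{a+1}a$, that is, $+a$ if $a$ is odd and $-a$ if $a$ is even. Reducing modulo $3$ for each residue class $a\equiv r\pmod 6$:
\begin{itemize}
\item $r=0$: $a$ is even and $a\equiv0\pmod 3$, so the contribution is $-a\equiv 0$.
\item $r=1$: $a$ is odd and $a\equiv1\pmod 3$, so the contribution is $+1$.
\item $r=2$: $a$ is even and $a\equiv2\pmod 3$, so the contribution is $-2\equiv1$.
\item $r=3$: $a$ is odd and $a\equiv0\pmod 3$, so the contribution is $0$.
\item $r=4$: $a$ is even and $a\equiv1\pmod 3$, so the contribution is $-1$.
\item $r=5$: $a$ is odd and $a\equiv2\pmod 3$, so the contribution is $2\equiv-1$.
\end{itemize}
Thus each part contributes, modulo $3$, $+1$ if $r\in\{1,2\}$, $-1$ if $r\in\{4,5\}$, and $0$ if $r\in\{0,3\}$.

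\textbf{Step 2: sum over the family.} Summing these contributions over all parts, counted with multiplicity, of all $\lambda\in\mathcal A(n)$ gives
$$
S_{\mathcal P}(n)\equiv T_1(n)+T_2(n)-T_4(n)-T_5(n)\pmod 3.
$$
Rewriting $S_{\mathcal P}(n)$ as $\sum_{\lambda}\sum_{a\in\lambda}(-1)^{a+1}a$ and then regrouping by residue class is legitimate because the family $\mathcal A(n)$ is finite.

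\textbf{Step 3: conclusion.} The congruence $S_{\mathcal P}(n)\equiv0\pmod 3$ holds if and only if $T_1(n)+T_2(n)\equiv T_4(n)+T_5(n)\pmod 3$, which is the claimed equivalence.

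The only real work is the residue table in Step 1, in particular getting the sign right for the even classes $r=2$ and $r=4$. Everything else follows from linearity of the sum.
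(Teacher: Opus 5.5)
Your proof is correct and matches the paper's argument: the paper also tabulates each part's contribution modulo $3$ by its residue modulo $6$, sums over all parts in $\mathcal A(n)$ to get $S_{\mathcal P}(n)\equiv T_1(n)+T_2(n)-T_4(n)-T_5(n)\pmod 3$, and reads off the equivalence. Your remark that $\mathcal A(n)$ is finite, which justifies regrouping the sum by residue class, is a small point of care that the paper leaves implicit.
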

\begin{proof}
	A part $a\in\lambda\in\mathcal A(n)$ contributes $+a$ to $
	S_{\mathcal P}(n)$ if $a$ is odd and $-a$ if
	$a$ is even. So by reducing modulo $3$ and classifying the parts modulo $6$,
	we obtain
	$$
	\begin{array}{c|c|c}
		a\pmod 6 & \text{$S_{\mathcal P}(n)$ contribution} & \text{contribution modulo }3\\
		\hline
		0 & -a & 0\\
		1 & +a & 1\\
		2 & -a & 1\\
		3 & +a & 0\\
		4 & -a & -1\\
		5 & +a & -1\\
		\hline
	\end{array}
	$$
	Thus, parts congruent to $1$ or $2 \pmod 6$ contribute $+1$ modulo $3$; parts
	congruent to $4$ or $5 \pmod 6$ contribute $-1$ modulo $3$, and parts congruent
	to $0$ or $3 \pmod 6$ contribute $0$ modulo $3$. Summing up  these contributions over all partitions in
	$\mathcal A(n)$, we obtain
	\begin{equation}\label{Tb2}
		S_{\mathcal P}(n)
		\equiv
		T_1(n)+T_2(n)-T_4(n)-T_5(n)
		\pmod 3.    
	\end{equation}
	Now, the required result  follows immediately from \eqref{Tb2}.
\end{proof}

\begin{theorem}
	Let $k$ be a positive integer and $L=\operatorname{lcm}(2,k).$ For $r=0,1,\ldots,L-1$, let $M_r(n)$ denote the total number of parts congruent
	to $r\pmod L$ occurring in all objects of $\mathcal A(n)$ of partition type $\mathcal{P}$, counted with
	multiplicity. Then
	$
	S_{\mathcal P}(n)
	\equiv
	0
	\pmod k$
	if and only if $
	\sum_{r=0}^{L-1}
	(-1)^{r+1}r\,M_r(n)
	\equiv 0\pmod k.
	$
\end{theorem}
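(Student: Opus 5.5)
We follow the argument used for the modulo $3$ result just proved, replacing the modulus $6=\operatorname{lcm}(2,3)$ by $L=\operatorname{lcm}(2,k)$. Each part $a$ of each $\lambda\in\mathcal A(n)$ contributes $(-1)^{a+1}a$ to $S_{\mathcal P}(n)$, namely $+a$ if $a$ is odd and $-a$ if $a$ is even. Summing over all parts of all objects of $\mathcal A(n)$, counted with multiplicity, gives
$$
S_{\mathcal P}(n)=\sum_{\lambda\in\mathcal A(n)}\ \sum_{a\in\lambda}(-1)^{a+1}a .
$$

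The key step is to reduce each contribution modulo $k$ according to the residue class of $a$ modulo $L$. Suppose $a\equiv r\pmod L$ with $0\le r\le L-1$. Since $2\mid L$, the parities of $a$ and $r$ agree, so $(-1)^{a+1}=(-1)^{r+1}$. Since $k\mid L$, we have $a\equiv r\pmod k$. Hence
$$
(-1)^{a+1}a\equiv(-1)^{r+1}r\pmod k .
$$
This is why $L$ must be divisible by both $2$ and $k$, and it is the only point where $\operatorname{lcm}(2,k)$ is used.

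Next, group the parts by their residue $r$ modulo $L$. There are exactly $M_r(n)$ parts in class $r$, counted with multiplicity, and each contributes $(-1)^{r+1}r$ modulo $k$. Therefore
$$
S_{\mathcal P}(n)\equiv\sum_{r=0}^{L-1}(-1)^{r+1}r\,M_r(n)\pmod k,
$$
which is the analogue of \eqref{Tb2}. The stated equivalence follows at once: the left side is $\equiv 0\pmod k$ exactly when the right side is.

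There is no real obstacle. The only care needed is the bookkeeping that justifies interchanging the sums, i.e. checking that $\sum_r M_r(n)$ counts every part exactly once. This holds whenever $\mathcal A(n)$ is a finite family, which is implicit in the definition \eqref{b5}. As a check, taking $k=3$ gives $L=6$, and the sum reduces to $T_1(n)+T_2(n)-T_4(n)-T_5(n)$ modulo $3$, recovering the previous theorem.
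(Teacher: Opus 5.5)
Your proof is correct and follows essentially the same route as the paper: you reduce each part's signed contribution modulo $k$ using its residue $r$ modulo $L$, then sum over residue classes to obtain $S_{\mathcal P}(n)\equiv\sum_{r=0}^{L-1}(-1)^{r+1}r\,M_r(n)\pmod k$. You also state explicitly that $2\mid L$ forces $a$ and $r$ to have the same parity, which the paper's proof uses without saying so.
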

\begin{proof}
	If any part $a$ of the partition $\lambda\in\mathcal A(n)$ satisfies $a\equiv r\pmod L$, then $a\equiv r\pmod k$. Therefore, the contribution of $a$ modulo $k$ to $S_{\mathcal P}(n)$  is $r \pmod k$ if $r$ is odd and  $-r \pmod k$ if $r$ is even.
	Summing up  these contributions over all parts in all partitions of  $\mathcal A(n)$, we obatin
	\begin{equation}\label{Tb3}
		S_{\mathcal P}(n)
		\equiv
		\sum_{r=0}^{L-1}
		(-1)^{r+1}r\,M_r(n)
		\pmod k.    
	\end{equation}
	Now the desire result immediately follows from \eqref{Tb3}.
\end{proof}

\section{Colour partition analogue $S_c(n)$ of $SOME(n)$ function}

Let  $\mathcal P_{\mathbf c}$  denote the colour partition of a positive integer $n$ of the type  in which  a part of size $j$ in any partition of $n$ appears in $c_j$ different colors, where $c_j\ge 0$.
 If $A_c(n)$ denotes the total number partitions of a positive integer $n$ of the type $\mathcal P_c$, then its generating function is given by 
$$
\Gamma(q)
:=
\sum_{n=0}^{\infty}A_{\mathbf c}(n)q^n
=
\prod_{j=1}^{\infty}\dfrac{1}{(1-q^j)^{c_j}}.
$$
Let $S_{\mathbf c}(n)$ denote the corresponding $SOME(n)$ function defined over all the partitions of $n$ of the type $\mathcal P_c$.
\begin{theorem}
	We have 
	\begin{equation}\label{Tc1}
		\sum_{n=0}^\infty S_{\mathbf c}(n)q^n
		=
		\Gamma(q)
		\sum_{j=1}^{\infty}
		(-1)^{j+1}j c_j
		\frac{q^j}{(1-q^j)}.
	\end{equation}
\end{theorem}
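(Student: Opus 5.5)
We will prove \eqref{Tc1} with a two-variable generating function, marking each part by the weight it contributes to $\omega(\lambda)$, and then differentiating. Concretely, we introduce an auxiliary variable $z$ and consider
$$
F(z,q):=\prod_{j=1}^{\infty}\dfrac{1}{(1-z^{(-1)^{j+1}j}q^j)^{c_j}}.
$$
Expanding each factor $(1-z^{\pm j}q^j)^{-c_j}$ as a product of $c_j$ geometric series, one for each colour, shows that a coloured partition $\lambda$ of $n$ of type $\mathcal P_{\mathbf c}$ contributes the monomial $z^{\omega(\lambda)}q^n$. Here each copy of a part $j$, whatever its colour, contributes $+j$ to the exponent of $z$ if $j$ is odd and $-j$ if $j$ is even. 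Hence
$$
F(z,q)=\sum_{n\ge0}\sum_{\lambda}z^{\omega(\lambda)}q^n,
$$
and therefore
$$
\sum_{n\ge0}S_{\mathbf c}(n)q^n=\left.\dfrac{\partial}{\partial z}F(z,q)\right|_{z=1}.
$$
We also note that $F(1,q)=\Gamma(q)$.

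To compute this derivative we use logarithmic differentiation, as in the proof of \eqref{mt2e7}. Write $\epsilon_j=(-1)^{j+1}j$. Then
$$
\dfrac{\partial}{\partial z}\log F(z,q)=\sum_{j=1}^{\infty}c_j\,\dfrac{\epsilon_j z^{\epsilon_j-1}q^j}{1-z^{\epsilon_j}q^j}.
$$
Setting $z=1$ gives $\sum_{j\ge1}(-1)^{j+1}j c_j\,q^j/(1-q^j)$. Multiplying this by $F(1,q)=\Gamma(q)$ yields exactly the right-hand side of \eqref{Tc1}.

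As a cross-check, one can argue combinatorially instead. The total contribution of parts of size $j$ to $S_{\mathbf c}(n)$ equals $\pm j$ times the total number of occurrences of $j$, summed over colours. The generating function for the number of occurrences of $j$ in a fixed colour is $\Gamma(q)\,q^j/(1-q^j)$. This follows by factoring off that colour's geometric series and using
$$
\sum_{m\ge1}mq^{jm}(1-q^j)=\dfrac{q^j}{1-q^j}.
$$
Summing over the $c_j$ colours and over $j$ with sign $(-1)^{j+1}$ gives the same formula.

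The only delicate point is justifying the formal manipulations: differentiating an infinite product term by term and evaluating at $z=1$. Since $c_j\ge0$ and only finitely many factors affect the coefficient of each $q^n$, we will work in the ring of formal power series in $q$ whose coefficients are Laurent polynomials in $z$. In that ring each coefficient is a finite computation, which handles the issue. Factors with $c_j=0$ are trivially $1$ and cause no problem.
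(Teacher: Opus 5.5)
Your proof is correct and follows the paper's own route. Your $F(z,q)$ is exactly the paper's $\Gamma(z,q)$, with odd parts marked by $z^{j}$ and even parts by $z^{-j}$, and the paper likewise differentiates in $z$ at $z=1$ by logarithmic differentiation; your explicit log-derivative computation, the combinatorial cross-check, and the remark about working with formal power series in $q$ whose coefficients are Laurent polynomials in $z$ fill in details the paper leaves implicit.
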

\begin{proof}
	Let, 
	$$
		\Gamma(z,q)
	=
	\prod_{\substack{j=1\\ j\text{ odd}}}^{\infty}
	(1-z^j q^j)^{-c_j}
	\prod_{\substack{j=1\\ j\text{ even}}}^{\infty}
	(1-z^{-j}q^j)^{-c_j}.
	$$
	Then,
	\begin{equation}\label{ptc1}
		\sum_{n=0}^{\infty}S_{\mathbf c}(n)q^n
		=
		\left.
		\dfrac{\partial}{\partial z}	\Gamma(z,q)
		\right|_{z=1}.
	\end{equation}
	By logarithmic differentiation of \eqref{ptc1}, we  obtain
	$$
	\left.
	\dfrac{\partial}{\partial z}\Gamma(z,q)
	\right|_{z=1}
	=
	\Gamma(q)
	\sum_{j=1}^{\infty}
	(-1)^{j+1}j c_j
	\frac{q^j}{(1-q^j)}.
	$$
	Hence, the proof is complete.
\end{proof}
In the following theorem, we will use well known Möbius inversion  formula for arithmetic functions. For any two  functions $F$ and $f$ defined over the set of positive integers, 
$$
	F(n)=\sum_{d\mid n} f(d),    
$$
if and only if
$$
	f(n)=\sum_{d\mid n}\mu(d)\,F\!\left(\frac{n}{d}\right),    
$$
where $\mu$ denotes the Möbius function. 
\begin{theorem}\label{t5}
	For every integer  $n\geq1$,  $S_{\mathbf c}(n)\equiv 0\pmod k$ if and only if $k|jc_j$ for all $j\geq1.$
\end{theorem}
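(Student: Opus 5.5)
The plan is to read the statement as ``$S_{\mathbf c}(n)\equiv 0\pmod k$ for every $n\geq 1$ if and only if $k\mid jc_j$ for every $j\geq 1$''. I will prove it by separating the generating function \eqref{Tc1} into the invertible factor $\Gamma(q)$ and a Lambert series, and then applying the Möbius inversion formula to the Lambert coefficients.

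First I expand the Lambert series in \eqref{Tc1}. Writing $q^j/(1-q^j)=\sum_{m\geq1}q^{jm}$ and collecting powers of $q$ gives
$$
\sum_{j=1}^{\infty}(-1)^{j+1}jc_j\frac{q^j}{1-q^j}=\sum_{n=1}^{\infty}B(n)q^n,\qquad B(n):=\sum_{d\mid n}f(d),\quad f(d):=(-1)^{d+1}d\,c_d .
$$
Hence \eqref{Tc1} reads $\sum_{n\geq0}S_{\mathbf c}(n)q^n=\Gamma(q)\sum_{n\geq1}B(n)q^n$.

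Next I show that $S_{\mathbf c}(n)\equiv0\pmod k$ for all $n$ is equivalent to $B(n)\equiv0\pmod k$ for all $n$. The key observation is that $\Gamma(q)=\prod_j(1-q^j)^{-c_j}$ and its reciprocal $\Gamma(q)^{-1}=\prod_j(1-q^j)^{c_j}$ are both power series with integer coefficients and constant term $1$; this uses $c_j\geq0$ being integers. Therefore the identity $\sum_n B(n)q^n=\Gamma(q)^{-1}\sum_n S_{\mathbf c}(n)q^n$ also holds. Reducing both identities modulo $k$, the series $\sum S_{\mathbf c}(n)q^n$ vanishes mod $k$ exactly when $\sum B(n)q^n$ does. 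Equivalently, one can argue by induction on $n$: since $\Gamma$ has constant term $1$, $S_{\mathbf c}(n)=B(n)+\sum_{m<n}A_{\mathbf c}(n-m)B(m)$, and conversely with the coefficients of $\Gamma^{-1}$.

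Finally, I show that $B(n)\equiv0\pmod k$ for all $n$ is equivalent to $k\mid jc_j$ for all $j$.
\begin{itemize}
\item If $k\mid jc_j$ for all $j$, then every $f(d)\equiv0\pmod k$, so each $B(n)$ is a sum of multiples of $k$.
\item Conversely, by the Möbius inversion formula, $f(n)=\sum_{d\mid n}\mu(d)B(n/d)$, which is an integer combination of values $B(m)\equiv0\pmod k$. Hence $k\mid f(n)=\pm nc_n$ for every $n$.
\end{itemize}

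The only delicate point is the middle step: it requires the invertibility of $\Gamma(q)$ over $\mathbb Z[[q]]$. I would state explicitly that the $c_j$ are nonnegative integers, so that $\Gamma(q)^{-1}$ has integer coefficients. The condition is placed on all $n\geq1$ simultaneously, since $S_{\mathbf c}(0)=0$ trivially. The rest is routine bookkeeping.
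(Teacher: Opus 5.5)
Your proposal is correct and follows essentially the same route as the paper. Both factor \eqref{Tc1} as $\Gamma(q)$ times the Lambert series, use that $\Gamma(q)$ has integer coefficients and constant term $1$ to move the congruence between the two series, and finish with M\"obius inversion of $\sum_{d\mid n}(-1)^{d+1}d\,c_d$. The paper handles the middle step by the same triangular induction on coefficients that you give as your alternative; your invertibility of $\Gamma(q)$ in $\mathbb{Z}[[q]]$ is that same fact, and your reading of the statement as quantified over all $n\geq 1$ at once matches how the paper's proof treats it.
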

\begin{proof}
	From \eqref{Tc1}, we have
	$$
	\sum_{n=0}^{\infty}S_{\mathbf c}(n)q^n\equiv 0\pmod k
	$$
	if and only if
	$$\Gamma(q)
	\sum_{j=1}^{\infty}
	(-1)^{j+1}j c_j
	\frac{q^j}{(1-q^j)}
	\equiv 0\pmod k.
	$$
Now, let$$\Gamma(q)=1+\gamma_1q+\gamma_2q^2+\gamma_3q^3+\cdots, $$
and $$
L(q)=\sum_{j=1}^{\infty}
	(-1)^{j+1}j c_j
	\frac{q^j}{(1-q^j)}=l_1q+l_2q^2+l_3q^3+\cdots .$$
Then
\begin{equation}\label{GL}
\Gamma(q)L(q)
=
\left(1+\gamma_1q+\gamma_2q^2+\gamma_3q^3+\cdots\right)
\left(l_1q+l_2q^2+l_3q^3+\cdots\right).    
\end{equation}\\
Note that from \eqref{GL}, the coefficient of $q$ in $\Gamma(q)L(q)$ is $l_1$. Hence, if
$$
\Gamma(q)L(q)\equiv0 \pmod{k},
$$
then
$$
l_1\equiv0 \pmod{k}.
$$
Also from \eqref{GL} the coefficient of $q^2$ in $\Gamma(q)L(q)$ is
$$
l_2+\gamma_1l_1.
$$
Since $l_1\equiv0 \pmod{k}$, it follows that
$$
l_2\equiv0 \pmod{k}.
$$
Similarly, the coefficient of $q^3$ in $\Gamma(q)L(q)$ is
$$
l_3+\gamma_1l_2+\gamma_2l_1.
$$
Since $l_1\equiv l_2\equiv0 \pmod{k}$, we obtain
$$
l_3\equiv0 \pmod{k}.
$$
Continuing in this way,  by induction we obtain
$$
l_n\equiv0 \pmod{k}
\quad\text{for every } n\geq1.
$$
Therefore,
$$
\Gamma(q)L(q)\equiv0 \pmod{k}
\quad\text{if and only if}\quad
L(q)\equiv0 \pmod{k}.
$$
Also, since
	$$
	\dfrac{q^j}{1-q^j}
	=
	q^j+q^{2j}+q^{3j}+\cdots,
	$$
	the coefficient of $q^n$ in $L(q)=\sum_{j=1}^{\infty}
	(-1)^{j+1}j c_j
	\dfrac{q^j}{1-q^j}
	$ is $\sum_{j\mid n}(-1)^{j+1}j c_j.$ Therefore,  $$\sum_{j=1}^{\infty}
	(-1)^{j+1}j c_j
	\dfrac{q^j}{1-q^j}
	\equiv 0\pmod k \text{ if and only if }\sum_{j\mid n}(-1)^{j+1}j c_j
	\equiv 0\pmod k,$$
    for every $n\geq1.$
	Set, $$
	a_j=(-1)^{j+1}j c_j.
	$$
	Then, by Mobius inversion formula, for any integer $n\geq1$,
	$$
	\sum_{j\mid n}a_j\equiv 0\pmod k 
	$$
	 if and only if $a_j\equiv 0\pmod k$ 
	for all  $j\geq1$. Thus,
	$
	(-1)^{j+1}j c_j\equiv 0\pmod k \text{ if and only if }k\mid j c_j
	$
	Hence,  the proof is complete. \end{proof}
Following  corollaries follow easily from Theorem  \ref{t5}. 

\begin{corollary} For any integer $n\ge 1$, 
	$
	S_{\mathbf c}(n)\equiv0\pmod3
	$
	if and only if
	$
	c_j\equiv0\pmod3$
	whenever $3\nmid j
	$ and 
	parts divisible by $3$ can appear in any colour. 
\end{corollary}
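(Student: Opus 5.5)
The plan is to specialise Theorem \ref{t5} to $k=3$. Theorem \ref{t5} says that $S_{\mathbf c}(n)\equiv 0\pmod 3$ for every $n\geq 1$ if and only if $3\mid jc_j$ for all $j\geq 1$. So the work reduces to unpacking the elementary condition $3\mid jc_j$, split according to whether $3$ divides $j$.

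First, take $j$ with $3\nmid j$. Then $\gcd(j,3)=1$, since $3$ is prime. Euclid's lemma then gives $3\mid jc_j$ if and only if $3\mid c_j$, that is, $c_j\equiv 0\pmod 3$.

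Second, take $j$ with $3\mid j$. Then $3\mid jc_j$ holds automatically for every nonnegative integer $c_j$. So no constraint is placed on these colour multiplicities, which is the meaning of ``parts divisible by $3$ can appear in any colour.''

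Combining the two cases, the condition ``$3\mid jc_j$ for all $j\geq 1$'' is equivalent to ``$c_j\equiv 0\pmod 3$ whenever $3\nmid j$, with $c_j$ arbitrary when $3\mid j$.'' By Theorem \ref{t5}, this is in turn equivalent to $S_{\mathbf c}(n)\equiv 0\pmod 3$ for all $n\geq 1$, which is the corollary.

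There is no real obstacle, because all the analytic content (the generating function \eqref{Tc1} and the Möbius inversion step) sits inside Theorem \ref{t5}. The one point needing care is the quantifier. The corollary, like Theorem \ref{t5}, must be read with ``for every $n\geq 1$'' governing the congruence, not as a statement about a single fixed $n$. I would say this explicitly, since the equivalence fails for an individual $n$.
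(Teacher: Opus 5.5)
Your proof is correct and follows the paper's approach: the paper says only that the corollary follows easily from Theorem \ref{t5}, and your specialisation to $k=3$, with the case split on whether $3\mid j$ (using that $3$ is prime), is exactly that deduction. You are also right that the congruence must be read as holding for every $n\geq 1$, since it fails as a pointwise equivalence (e.g.\ $c_1=1$ and $c_j=0$ otherwise gives $S_{\mathbf c}(3)=3\equiv 0\pmod 3$ although $3\nmid 1\cdot c_1$).
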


\begin{corollary}
	For any integer $n\ge 1$, 
	$
	S_{\mathbf c}(n)\equiv0\pmod4
	$
	if and only if
	$$
	\begin{cases}
		4\mid c_j, & j\equiv1,3\pmod4,\\
		2\mid c_j, & j\equiv2\pmod4,\\
		\text{no restriction on }c_j, & j\equiv0\pmod4.
	\end{cases}
	$$
\end{corollary}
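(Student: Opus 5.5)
The corollary is Theorem \ref{t5} with $k=4$, so the plan is to translate the condition $4\mid jc_j$ for all $j\ge1$ into conditions on $c_j$ by splitting $j$ according to its residue modulo $4$. By Theorem \ref{t5}, $S_{\mathbf c}(n)\equiv0\pmod4$ for every $n\ge1$ holds if and only if $4\mid jc_j$ for every $j\geq1$. The corollary is therefore a statement about how large a power of $2$ the factor $c_j$ must supply, given the $2$-adic valuation $\nu_2(j)$.

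\textbf{Case $j\equiv1,3\pmod4$.} Here $j$ is odd, so $\gcd(j,4)=1$. Hence $4\mid jc_j$ if and only if $4\mid c_j$.

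\textbf{Case $j\equiv2\pmod4$.} Write $j=2u$ with $u$ odd. Then $4\mid 2uc_j$ if and only if $2\mid uc_j$. Since $u$ is odd, this holds if and only if $2\mid c_j$.

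\textbf{Case $j\equiv0\pmod4$.} Here $4\mid j$, so $4\mid jc_j$ for every $c_j\ge0$, and no condition is imposed.

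Because the condition of Theorem \ref{t5} is imposed separately for each $j$, combining the three cases gives both directions of the equivalence at once, which is exactly the displayed case list. There is no real obstacle; the only point to state explicitly is that the ``for all $j$'' quantifier in Theorem \ref{t5} decouples into independent conditions, one per residue class of $j$ modulo $4$.
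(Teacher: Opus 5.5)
Your argument is correct and is the paper's route: the paper only says this corollary follows from Theorem \ref{t5}, and your specialization to $k=4$, splitting $j$ by its residue modulo $4$, is exactly the verification it leaves implicit. You also read the quantifier correctly: the congruence must hold for all $n\ge 1$ simultaneously, which is the only reading under which the $n$-independent condition on the $c_j$ makes sense.
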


\begin{corollary}
	Let $\mathcal{P}_{ck}$ denote the colour partition type of a positive integer  $n$ in which  each part $j$ is a multiple of $k$ and appears in $c_j$ different colours. 
	Let   $S_{ck}(n)$ be the corresponding $SOME(n)$ function defined over all partitions of $n$ of type  $\mathcal{P}_{ck}$. Then, for any integer $n\ge 1$, 
$$
	S_{ ck}(n)\equiv0\pmod k.
$$
\end{corollary}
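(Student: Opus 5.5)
The plan is to apply Theorem \ref{t5} to the colour sequence of type $\mathcal{P}_{ck}$. View $\mathcal{P}_{ck}$ as a special case of $\mathcal P_{\mathbf c}$ by setting the colour counts to
$$
c'_j=\begin{cases} c_j, & k\mid j,\\ 0, & k\nmid j.\end{cases}
$$
Then every partition of type $\mathcal P_{ck}$ is exactly a partition of type $\mathcal P_{\mathbf c'}$. Parts not divisible by $k$ have zero colours and so cannot occur. Hence $S_{ck}(n)=S_{\mathbf c'}(n)$ for all $n$, and the generating function identity \eqref{Tc1} holds with $\Gamma(q)=\prod_{k\mid j}(1-q^j)^{-c_j}$.

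Next we check the divisibility hypothesis of Theorem \ref{t5}, namely $k\mid j c'_j$ for every $j\ge1$. There are two cases.
\begin{itemize}
\item If $k\nmid j$, then $c'_j=0$, so $jc'_j=0$, which is divisible by $k$.
\item If $k\mid j$, then $k\mid jc'_j$ trivially, whatever the value of $c_j$.
\end{itemize}
So the condition holds for all $j$. The ``if'' direction of Theorem \ref{t5} then gives $S_{ck}(n)\equiv0\pmod k$ for every $n\ge1$.

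The only delicate point is that Theorem \ref{t5}'s ``if'' direction must genuinely follow from \eqref{Tc1}, since its stated proof mostly argues the converse. I would verify this directly, and the argument is short. If $k\mid jc'_j$ for all $j$, every coefficient $\sum_{j\mid n}(-1)^{j+1}jc'_j$ of $L(q)$ is divisible by $k$. Since $\Gamma(q)$ has integer coefficients, $\Gamma(q)L(q)\equiv 0\pmod k$ coefficientwise.

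There is also a direct combinatorial alternative. Every part $a$ of a partition of type $\mathcal P_{ck}$ is a multiple of $k$, so $\omega(\lambda)=\sum \pm a\equiv 0\pmod k$ for each such $\lambda$. Summing over all coloured partitions gives the result, exactly in the spirit of the proof of \eqref{Tb3}. I would present the Theorem \ref{t5} route and mention this one as a check.
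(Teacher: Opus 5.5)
Your proposal is correct and takes the same route as the paper, which says this corollary follows directly from Theorem \ref{t5}. That is exactly your specialization $c'_j=0$ for $k\nmid j$ together with the observation $k\mid jc'_j$ for all $j$. Your direct check of the trivial ``if'' direction ($L(q)\equiv 0$ and $\Gamma(q)\in\mathbb{Z}[[q]]$ give $\Gamma(q)L(q)\equiv 0 \pmod k$) is a sensible safeguard, and your part-by-part argument that $\omega(\lambda)\equiv 0\pmod k$ is also valid.
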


\begin{corollary} For any positive integer $n$, the $SOME(n)$ function defined over all the partitions of a positive integer  $n$ such that each part  appearing in $k$ different colours in any partition of $n$  is divisible by $k$. 
\end{corollary}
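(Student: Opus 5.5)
The statement is the last corollary: when every part size appears in exactly $k$ colours, that is, $c_j=k$ for all $j\geq1$, the associated function $S_{\mathbf c}(n)$ is divisible by $k$ for every $n\geq1$. I will derive it as a direct specialization of Theorem \ref{t5}, whose sufficiency direction says that $k\mid jc_j$ for all $j$ forces $S_{\mathbf c}(n)\equiv0\pmod k$. With $c_j=k$ we get $jc_j=jk$, which is trivially divisible by $k$, so the hypothesis of Theorem \ref{t5} holds and the conclusion follows at once.

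For robustness I will also give the one-line direct argument from \eqref{Tc1}, since the sufficiency direction does not need the M\"obius-inversion part of Theorem \ref{t5}.

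Substituting $c_j=k$ into \eqref{Tc1} gives
\begin{equation*}
\sum_{n=0}^{\infty}S_{\mathbf c}(n)q^n = k\,\Gamma(q)\sum_{j=1}^{\infty}(-1)^{j+1}j\frac{q^j}{1-q^j},
\end{equation*}
where $\Gamma(q)=1/(q;q)_\infty^{k}$.

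Both $\Gamma(q)$ and the Lambert series have integer coefficients. Hence every coefficient on the right is $k$ times an integer, so $S_{\mathbf c}(n)\equiv0\pmod k$.

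The only point needing care is interpretive rather than technical. The phrase ``is divisible by $k$'' must be read as the statement that the $SOME$-type value $S_{\mathbf c}(n)$ is divisible by $k$, not that the parts are. With that reading there is no real obstacle: the argument is immediate, and the integrality of the coefficients of $\Gamma(q)$ and of the Lambert series is the only fact used.
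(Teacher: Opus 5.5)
Your proposal is correct and takes the same approach as the paper, which obtains this corollary by specializing Theorem \ref{t5} to $c_j=k$, so that $k\mid jc_j$ for every $j$. Your supplementary direct argument from \eqref{Tc1} is also valid: it factors out $k$ and uses that $\Gamma(q)=(q;q)_\infty^{-k}$ and the Lambert series both have integer coefficients, which shows that only the easy sufficiency direction of Theorem \ref{t5} is needed.
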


   \section*{\bf Declarations}
   
   \noindent{\bf Author Contributions.} Both authors contributed equally to this work.
   
 \noindent{\bf Conflict of Interest.} The authors declare that there is no conflict of interest regarding the publication of this article.
 
 \noindent{\bf Human and Animal Rights.} The authors declare that there is no research involving human participants or animals in the context of this paper.	
 
 \noindent{\bf Data Availability Statements.} Data sharing not applicable to this article as no datasets were generated or analyzed during the current study.	
\bibliographystyle{plain}

\end{document}